\documentclass[]{amsart}

\usepackage{color}
\usepackage{amsmath,amssymb,graphicx,MnSymbol}
\usepackage[utf8]{inputenc}
\usepackage[color,arrow,curve,matrix]{xy}
\usepackage{tikz-cd}
\def\xto#1{\xrightarrow[]{#1}}

\def\mspec{{\sf Spec}}
\def\k{{\mathsf K}}
\def\kSpec{\k{\sf Spec}}
\def\R{\mathbb{R}}
\def\C{\mathbb{C}}
\def\Adm{{\sf Adm}}
\def\fC{\mathfrak{C}}

\def\F{\mathfrak{F}}
\def\N{\mathbb{N}}
\def\Z{\mathbb{Z}}
\def\Grpd{\mathfrak{Grpd}}

\def\px{\mathfrak{x}}
\def\py{\mathfrak{y}}
\def\fR{\mathfrak{R}}

\def\U{{\mathcal U}}
\def\A{\mathbb{A}}
\def\RSpec{\mathbb{R}{\sf Spec}}
\def\id{{\sf Id}}
\def\Hom{{\sf Hom}}
\def\Spec{{\sf Spec}}

\def\Supp{{\sf Supp}}

\DeclareMathOperator\colim{\mathsf{colim}}
\DeclareMathOperator\tcolim{2\text{-}\colim}

\def\ide{{\sf Idem}}
\def\c{{\sf c}}
\def\Piv{{\sf Piv}}

\def\sP{{\sf P}}
\def\sl{\mathop{\alpha_{\sf sl}}}
\def\cG{\mathcal{G}}
\def\cD{\mathcal{D}}
\def\cC{\mathcal{C}}
\def\cF{\mathcal{F}}

\newcommand{\cal}[1]{\mathcal{#1}}

\def\r{\mathfrak{r}}
\def\t{\mathfrak{t}}
\def\p{\mathfrak{p}}

\def\m{\mathfrak{m}}
\newcommand{\gen}[1]{\left\langle #1 \right\rangle}
\newcommand{\x}{\mathsf{x}^{\mathsf{o}}}
\def \pt{\mathop{\bullet}\nolimits}

\newtheorem{Pro}{Proposition}[subsection]
\newtheorem{Le}[Pro]{Lemma}
\newtheorem{Th}[Pro]{Theorem}
\newtheorem{Co}[Pro]{Corollary}
\theoremstyle{definition}

\theoremstyle{remark}
\newtheorem{Exm}[Pro]{Example}

\newtheorem{Rem}[Pro]{Remark}

\def\fb{{\mathfrak B}}

\title{The fundamental group of binoid varieties}
\author{Holger Brenner$^1$ \& Ilia Pirashvili$^2$}
\address{$^1$ - Institut f{\"u}r Mathematik, Universit{\"a}t Osnabr{\"u}ck, Albrechtstra{\ss}e 28a, 49076 Osnabr{\"u}ck, Germany.}
\email{holger.brenner@uni-osnabrueck.de}
\address{$^2$ - Institut f{\"u}r Mathematik, Universit{\"a}t Osnabr{\"u}ck, Albrechtstra{\ss}e 28a, 49076 Osnabr{\"u}ck, Germany.}
\email{ilia.pirashvili@uni-osnabrueck.de}
\begin{document}

\maketitle

\begin{abstract} Binoid schemes generalise monoid schemes, which in turn enable us to generalise toric varieties. Let $X$ be a binoid scheme. The aim of this paper is to calculate the topological fundamental group of $\k X$, where $\k=\C$ or $\R$. For the latter, we will give an explicit way of calculating the fundamental group using methods from 2-category theory. Indeed, we will calculate the more general fundamental groupoid.

As a specialisation, we will also look at the Stanley Reisner Rings. Our method simplifies in this case, allowing us to describe the fundamental groupoid in terms of the simplicial complex directly.
\end{abstract}

\section{Introduction} The notion of a monoid scheme was introduced by Deitmar \cite{deitmar} in 2005, following the works of Kato \cite{kato}, who proved the link between monoid schemes and toric varieties in 1994. This link was further explored in \cite{chww}. Indeed, monoid schemes are closely connected with many areas of mathematics, such as algebraic k-theory \cite{chww}, tropical geometry \cite{jep}, logarithmic geometry \cite{lgm}, \cite{lsf}, absolute mathematics \cite{kow} and most notably $\mathbb{F}_1$-geometry \cite{cc1}.

A binoid is a monoid with an absorbing element, which we denote by $0$, as we will mostly stick with the multiplicative notation in this paper. Further, all binoids and monoids are commutative and finitely generated throughout this paper. We are able to construct binoid schemes, much in the same way we construct monoid schemes. For a commutative ring $\k$ and a binoid $M$, we can consider the $\k$-algebra $\k[M]$. Likewise, for a binoid scheme $X$, we can consider $\k[X]$. Further details to these constructions can be found in \cite{chww}, but the reader is advised that, while a monoid in their terminology has an absorbing element, it need not be preserved by a homomorphism.

There are many reasons to be interested in binoids rather than monoids. Chief amongst them is that they are more general and include many important classes, such as the Stanley-Reisner Rings, while still being within the scope of the same tools. Another reason in our case is because it also helps in our proof, see Section \ref{binoidsuse}.

A very closely related, but distinct, construction is to consider $\kSpec(M):=\Hom(M,\k)$. Here, $\k$ is seen as a multiplicative binoid. This is a subset of $\k^n$, with $n$ being the number of generators of $M$. We have $\kSpec(M)\simeq \kSpec(\k[M])$, where the latter denotes the set of $\k$-algebra homomorphisms from $\k[M]$ to $\k$. For example, if $M:=\gen{x}\simeq \N$, $\kSpec(M)\simeq \k$, whereas $\k[M]\simeq \k[x]$. It is basically the zeroset of our binoid. Naturally, we can also consider $\k X$, for a binoid scheme $X$. It is the set of $\k$-valued points of the scheme $\k[X]$.

If $\k$ is equipped with a topology, $\k X$ will inherit it. In this paper, we will focus on the cases when $\k=\R,\C$, the real or complex numbers. Our aim is to calculate $\pi_1(\k X)$, where the topology of $\k X$ is induced by the topology of $\k^n$. We will actually calculate the more general $\Pi_1(\k X)$, where $\Pi_1$ denotes the fundamental groupoid. The latter is defined to be a category, whose objects are the points of the given topological space, and whose morphisms are homotopy classes of paths. It can be shown readily that this is equivalent to taking the fundamental group at every connected component simultaneously. As such, it carries the information of $\pi_0$ and $\pi_1$ at every connected component.

One of the main advantages of the fundamental groupoid is that the Seifert-van Kampen theorem no longer requires a common point. This allows us to use the affine covering of a binoid scheme $X$, to calculate the fundamental group(oid) of $\k X$.

As such, our focus will become the affine case $\kSpec(M)$. One of our main theorems is the reduction of the computation of the homotopy of general binoids to grouplike binoids in Theorem \ref{reductiontogroupsnonsep}. By a grouplike binoid, we mean a group with an absorbing element added.

This is proven in several steps. We first prove it in the special case when $M$ is integral and separated. Then, we will remove these conditions one after the other using induction arguments, primarily on the number of prime ideals.

However, we can simplify things further when $\k=\R$. Theorems \ref{Seifert-vanKampen} and \ref{colim=2-colim} allows us to reduce the calculation of the fundamental group(oid) of a topological space $X$ to simple combinatorics, if $X$ admits a discrete covering which we understand in full. By discrete, we mean a covering $\{U_i\}$ of $X$, where every $\Pi_1(U_i)$, every $\Pi_1(U_{ij})$ and every $\Pi_1(U_{ijk})$ is a discrete groupoid. That is, a groupoid where the automorphism group of every point is trivial.

Equivalently, $\{U_i\}$ is a discrete covering if the fundamental group of every $U_i$, $U_{ij}$ and $U_{ijk}$ is trivial at every point. Note that, however, $\pi_0$ need not be trivial.

As it turns out, $\Pi_1(\RSpec(M))$ is a discrete groupoid by Corollary \ref{pi0group}, for every binoid $M$. We are also able to understand the functors between the fundamental groupoids, induced by the gluing homomorphisms, see Subsection \ref{morps}. Hence, every quasi-separable binoid scheme has a natural discrete covering. This enables us to give a method for calculating the fundamental group(oid) of real binoid schemes.

This calculation is especially clear in the case of the Stanley-Reisner Rings. These are rings associated to a simplicial complex $\Delta$. Of course, the fundamental group of $\R(\Delta)$ can also be calculated in other ways. The aim of our calculations is to give a demonstration of our proposed methods in a relatively simpler case.

Ancillary, this also enables us to simplify the calculation of the homology of $\R X$ in Subsection \ref{homology}.

Our results are closely connected to the works by V. Uma \cite{Vuma}, where the author focuses on the fundamental group of real toric varieties, coming from smooth fans. However, even in that case, our methods are completely different, and the general idea of calculating the fundamental groupoid from a discrete covering could have independent uses.

\section{The $k$-Spectrum of finitely generated Binoids}\label{affinecase}

\subsection{Preliminaries}\label{msl}

A \emph{binoid} is a set $M$, together with a binary, associative operation $\times:M\times M\rightarrow M$ and two distinguished elements: An identity element $1\in M$, with $1m=m$ for all $m\in M$, and an absorbing element $0\in M$, with $0m=0$ for all $m\in M$. In the additive notation, the identity is denoted by $0$ and the absorbing element by $\infty$. We will, however, stick with the multiplicative notation. As one would assume, this absorbing element is preserved by binoid homomorphisms.  The category of commutative binoids is denoted by ${\bf bn}$. 
Throughout this paper, all binoids and monoids are assumed to be finitely generated, commutative and written using the multiplicative notation, unless specified otherwise.

A binoid is said to be \emph{grouplike} if every nonzero element is invertible. The set of all invertible elements of a binoid $M$, together with the absorbing element $0$, is denoted by $M^{\x}$. The associated grouplike binoid to any group $G$ will be denoted by $G^{\mathsf{o}}$ in this paper.

The set of idempotents of a binoid (or monoid) $M$ is denoted by $\ide(M)$. A binoid is called \emph{separated} if $x=xy$, $x,y\in M$ implies that $x=0$ or $y=1$. Recall that a binoid is called \emph{integral} if for all $xy=0$ in $M$, either $x=0$ or $y=0$. The nonzero elements $M^\bullet$ of such a binoid $M$ form a monoid.

We will need some special quotients of binoids and monoids in this paper. For an ideal $I$ of a binoid (or monoid) $M$ (that is to say, $0\in I$ and $IM\subseteq M$), we denote by $M/I$ the \emph{Rees quotient} (also called the \emph{Rees factorisation}), which is $(M\setminus I)\cup \{0\}$ as a set. The multiplication in $M/I$ is defined in such a way that $q:M\rightarrow M/I$ is a homomorphism. Indeed, it is universal with respect to that property. Here, $q(x)=x$ if $x\not \in I$ and $q(x)=0$, if $x\in I$.

We mention two special cases of this factorisation: i) we can take $I$ to be the set of all nilpotent elements and ii) we can take $I$ to be the principal ideal generated by $a$, so $I=aM=(a)$.  In the first case, the corresponding quotient is denoted by $M_{\sf red}$, while in the second case we use the notation $M/(a\sim 0)=M/(a)$.

For a submonoid $S\subseteq M$ of $M$, we can consider the quotient $M/(S\sim 1)$. Two elements $x,y\in M$ are equated under this congruence if and only if there are $s,t\in S$, such that $xs=yt$. This is the smallest congruence where $S\sim 1$, hence our notation.

In particular, for an element $a$, we can consider the submonoid $\gen{a}$ generated by $a$. In other words, we consider the quotient of $M$ by the congruence which equates $x$ and $y$ in $M$ if and only if $xa^m=ya^n$ for some $m,n\geq 0$. We denote this by $M/(a\sim 1)$ for obvious reasons.

If $M$ is a monoid, we denote by $M^{\sf sl}$ the quotient of $M$ by the congruence $\sim_{\sf sl}$, for which $a\sim_{\sf sl} b$ if and only if $a^m=bc$ and $b^n=ad$ for some $c,d\in M$ and $m,n\geq 1$. This is the associated semi-lattice of a monoid \cite{Grillet}, and is often called its \emph{booleanisation}, see for example \cite{S.Bot}. In $M^{\sf sl}$ all elements are idempotents and this is a minimal quotient with this property. That is, $M^{\sf sl}=M/(m\sim m^2)$. The natural map $M\rightarrow M^{\sf sl}$ is denoted by $\sl$.

We summarise some properties of these quotients which we will need later.

\begin{Le}\label{quot1} Let $M$ be a binoid.
\begin{itemize}
\item [i)] The natural homomorphism $q_1:M\rightarrow M/(a)$ yields a surjective map $\ide(M)\rightarrow \ide(M/(a))$ for any element $a\in M$.  Moreover, the preimage of any nonzero idempotent of $\ide(M/(a))$ in  $\ide(M)$ is  a singleton.

\item [ii)] Let $e\in \ide(M)$ be an idempotent element. The natural homomorphism $q_2:M\rightarrow M/(e\sim 1)$ yields a surjective map $\ide(M)\rightarrow \ide(M/(e\sim 1))$. 

\item [iii)] Assume $a\in M$ is invertible. Then the natural homomorphism  $q_2:M\rightarrow M/(a\sim 1)$ yields a surjective map $M^{\x}\rightarrow (M/(a\sim 1))^{\x}$.

\item [iv)] One has $(M_{\sf red})_{\sf red}=M_{\sf red}.$ Moreover, the canonical map $M\rightarrow M_{\sf red}$ is bijective on the unit group.
\end{itemize}
\end{Le}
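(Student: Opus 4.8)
The plan is to treat the four statements separately, exploiting in each case the explicit set-theoretic description of the quotient involved. The two Rees factorisations (parts i and iv) are easy to control because there $q$ is a bijection onto the complement of the ideal being collapsed, so distinct surviving elements are never identified; the two congruence quotients (parts ii and iii) are subtler, since genuinely distinct elements get identified, and there the strategy is to lift a representative and then prove it already has the desired property in $M$.

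For part i), I would start from a nonzero idempotent $\bar f \in \ide(M/(a))$. Since $M/(a)=(M\setminus(a))\cup\{0\}$ as a set, $\bar f$ corresponds to a unique $f\in M\setminus(a)$ with $q_1(f)=\bar f$. The relation $q_1(f^2)=\bar f\neq 0$ forces $f^2\notin(a)$, whence $q_1(f^2)=f^2=f$ in $M$; thus $f$ is itself idempotent and lifts $\bar f$, while the zero idempotent lifts to $0$, giving surjectivity. Uniqueness of the lift of a nonzero idempotent is then immediate, since any idempotent of $M$ lying over $\bar f\neq 0$ must avoid $(a)$ and hence coincides with $f$ as an element of $M\setminus(a)$.

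For part ii), I would first record that $e\sim 1$ (take $s=1$, $t=e$ in the defining relation of the congruence), so $q_2(e)=1$, and that $\gen{e}=\{1,e\}$ because $e^2=e$. Given $\bar g\in\ide(M/(e\sim 1))$ with any lift $g$, the element $ge$ still satisfies $q_2(ge)=q_2(g)q_2(e)=\bar g$. The crux is that $ge$ is \emph{honestly} idempotent in $M$: from $\bar g^2=\bar g$ we get $g^2\sim g$, i.e. $g^2 s=g t$ with $s,t\in\{1,e\}$, and multiplying by $e$ collapses both $se$ and $te$ to $e$, yielding $g^2e=ge$ and hence $(ge)^2=g^2e=ge$. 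This produces an idempotent lift and proves surjectivity. I expect this lifting step, namely manufacturing a true idempotent rather than merely one modulo the congruence, to be the main obstacle, precisely because here the quotient map is not injective on representatives.

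For part iii), invertibility of $a$ lets the relation $x\sim y\Leftrightarrow xa^m=ya^n$ be read with integer exponents. Given an invertible $\bar u$ with inverse $\bar v$, lifting to $u,v\in M$ gives $uv\sim 1$, so $uva^m=a^n$; multiplying by $a^{-n}$ shows $u\cdot(va^{m-n})=1$ in $M$, so $u$ is already a unit of $M$ over $\bar u$, which together with the trivial lift of $0$ gives surjectivity onto $(M/(a\sim 1))^{\x}$. For part iv), writing $N$ for the nilradical, reducedness of $M_{\sf red}$ follows because a nonzero nilpotent $\bar x$ would lift to some $x\in M\setminus N$ with $x^{nk}=0$, forcing $x\in N$, a contradiction; hence $(M_{\sf red})_{\sf red}=M_{\sf red}$. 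Finally, in a nontrivial binoid no unit is nilpotent (else $1=0$), so $q$ is injective on $M^{\x}$; and any unit $\bar u$ of $M_{\sf red}$ with $\bar u\bar v=1$ lifts to $u,v$ with $q(uv)=q(1)$, which forces $uv=1$ in $M$ since both $uv$ and $1$ avoid $N$, so $u\in M^{\x}$ maps onto $\bar u$. This yields the asserted bijection on unit groups.
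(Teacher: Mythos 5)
Your proof is correct and follows essentially the same route as the paper: parts i), iii) and iv) use the identical arguments (bijectivity of the Rees quotient off the collapsed ideal, and clearing the powers of the invertible $a$), and in part ii) your uniform choice of the lift $ge$ is just a streamlined version of the paper's four-case analysis, which takes $u=y$ when $(m,n)=(0,0)$ and $u=ye$ otherwise. No gaps.
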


\begin{proof} i) Let $z\in M/(a)$. Since $q_1$ is a surjection, we may assume that $z=q_1(x)\in \ide(M/(a))$. If $x\in aM$, $z$ is the image of the idempotent $0$.  If $z\neq 0$, this $x\in \ide(M)$ is the only idempotent of $M$, with $z=q_1(x)$, as $q_1$ restricted to $M\setminus(a)$ is bijective.

ii) Take an element $y\in M$, such that $q_2(y)$ is an idempotent in $M/(e \sim 1)$. Then $ye^m=y^2e^n$ for some $m,n\geq 0$. We have to find an idempotent $u\in M$, with $q_2(u)=q_2(y)$. Since $e$ is an idempotent, we can assume that $0\leq m,n\leq 1$. There are four cases to consider. If $(m,n)=(0,0)$, take $u=y$. In all other cases, take $u=ye$.

iii) Let $x\in M$ with $q_2(x)\in (M/(a\sim 1))^{\x}$. We may assume this since $q_2$ is surjective. There exists $y\in M$ and $m,n\geq 0$ such that $xya^m=a^n$ since $q_2(x)$ is invertible. It follows that $x\in M^{\x}$ since $a$ is invertible.

iv) Denote by $q$ the canonical surjection $M\rightarrow M_{\sf red}$. Assume $q(x)$ is nilpotent in $M_{\sf red}$. There exists $n\in \N$, such that $q(x)^n=0$. That is, $q(x^n)=q(0)$ and hence, $x^n$ is nilpotent in $M$. As such, $x$ is also nilpotent and $q(x)=0$.  The last assertion is obvious.
\end{proof}

\begin{Le}\label{quot2} Let $M$ be a monoid and $a\in M$.
\begin{itemize}

\item [i)] If $f:M\rightarrow M_1$ is a surjective homomorphism of monoids, the induced map $f^{\sf sl}:M^{\sf sl}\rightarrow M_1^{\sf sl}$ is surjective.

\item [ii)] The homomorphism $M^{\sf sl}\rightarrow (M/(a))^{\sf sl}$, induced by the natural map $q_1:M\rightarrow M/(a)$, is an isomorphism if and only if $a$ is nilpotent.

\item [iii)] The homomorphism $M^{\sf sl}\rightarrow (M/(a\sim 1))^{\sf sl}$, induced by the natural map $q_2:M\rightarrow M/(a\sim 1)$, is an isomorphism if and only if $a$ is invertible.
\end{itemize}
\end{Le}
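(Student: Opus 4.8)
The plan is to prove (i) directly from the functoriality of the booleanisation, and then to use (i) to reduce both (ii) and (iii) to a question of injectivity, which I would settle via the universal property of $\sl\colon M\to M^{\sf sl}$ as the initial semilattice under $M$. Recall that $M^{\sf sl}=M/(m\sim m^2)$ is finitely generated and idempotent, hence a finite semilattice, and that $(-)^{\sf sl}$ is a functor. For (i) itself, the composite $M\xrightarrow{f}M_1\xrightarrow{\sl}M_1^{\sf sl}$ is surjective, being a composite of the surjection $f$ with the canonical surjection onto $M_1^{\sf sl}$. By naturality this composite equals $M\xrightarrow{\sl}M^{\sf sl}\xrightarrow{f^{\sf sl}}M_1^{\sf sl}$; since the total map is surjective and factors through $f^{\sf sl}$, the latter is surjective as well.

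For (ii) and (iii) the maps $q_1$ and $q_2$ are surjective, so (i) already delivers surjectivity of both induced maps, and only injectivity -- equivalently, being an isomorphism -- remains. The key step is to identify the two targets explicitly. By the universal property, a homomorphism from $M$ to a semilattice $S$ descends to $M/(a)$ exactly when it is constant on the ideal $(a)$, i.e. when it sends $a$ to the minimum of its image, and it descends to $M/(a\sim 1)$ exactly when it sends $a$ to the identity $1_S$. Reading these conditions through $\sl$ shows that $(M/(a))^{\sf sl}$ is the Rees quotient of the semilattice $M^{\sf sl}$ by the principal ideal $\sl(a)M^{\sf sl}$ (the down-set of $\sl(a)$ collapsed onto the absorbing element), whereas $(M/(a\sim 1))^{\sf sl}$ is that same down-set $\sl(a)M^{\sf sl}$, realised as the image of the multiplication map $x\mapsto \sl(a)x$ on $M^{\sf sl}$.

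Given these descriptions the two equivalences follow. In (ii), collapsing the down-set of $\sl(a)$ is injective precisely when that down-set is a single point, i.e. when $\sl(a)$ is the absorbing element of $M^{\sf sl}$; as $\sl(a)$ is idempotent, $\sl(a)=\sl(0)$ holds iff $a\sim_{\sf sl}0$, which unwinds to $a^m=0$ for some $m\geq 1$, i.e. $a$ nilpotent. In (iii), the map $x\mapsto \sl(a)x$ onto $\sl(a)M^{\sf sl}$ is injective (hence an isomorphism) precisely when $\sl(a)=1$; and $\sl(a)=\sl(1)$ holds iff $a\sim_{\sf sl}1$, which unwinds to $1=ad$ for some $d\in M$, i.e. $a$ invertible.

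The main obstacle is getting the two semilattice-theoretic descriptions right: one must read off, from the Rees congruence and from the congruence defining $M/(a\sim 1)$, the correct operation on $M^{\sf sl}$ -- collapsing a down-set versus restricting to it -- and, in (ii), handle the absorbing element carefully so that ``$\sl(a)$ is minimal'' matches ``$a$ is nilpotent''. Once this is in place, each statement reduces to the elementary fact that in a finite semilattice a Rees quotient by a principal ideal (respectively, multiplication by a fixed idempotent) is injective iff that idempotent is the bottom (respectively, the top).
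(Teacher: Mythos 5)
Your proof is correct, and it reaches the same two pivot facts as the paper -- that $a\sim_{\sf sl}0$ unwinds to nilpotence and $a\sim_{\sf sl}1$ to invertibility -- but it gets there by a more structural route. The paper argues directly on congruences: for the ``if'' directions it checks that the defining congruence of the quotient ($x=ay$ identified with $0$, resp.\ $xa^m=ya^n$) is already contained in $\sim_{\sf sl}$ once $a$ is nilpotent (resp.\ invertible), so the induced map identifies nothing new and injectivity follows from surjectivity; for the ``only if'' directions it observes that $a$ and $0$ (resp.\ $a$ and $1$) are already equal in the quotient, so injectivity forces $\sl(a)=\sl(0)$ (resp.\ $\sl(a)=\sl(1)$) in $M^{\sf sl}$. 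You instead first compute the targets outright, identifying $(M/(a))^{\sf sl}$ with the Rees quotient of $M^{\sf sl}$ by the down-set $\sl(a)M^{\sf sl}$ and $(M/(a\sim 1))^{\sf sl}$ with that down-set itself via $x\mapsto\sl(a)x$, and then read off injectivity as ``$\sl(a)$ is bottom'' versus ``$\sl(a)$ is top''. Both identifications are correct (they follow from $(-)^{\sf sl}$ being a reflection onto semilattices together with the fact that, for a semilattice, the smallest congruence collapsing an ideal is the Rees congruence and $T/(e\sim 1)\cong eT$ for an idempotent $e$), and you rightly flag them as the step needing care. What your version buys is an explicit description of the two booleanisations, which is more than the lemma asks for but makes the ``only if'' directions transparent; the paper's version is shorter because it never needs to know what the targets are, only whether two specific elements collide. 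One caveat inherited from the statement itself, affecting both proofs equally: part (ii) tacitly assumes $M$ carries an absorbing element (otherwise ``nilpotent'' and the Rees quotient in the paper's sense do not apply), which is how the lemma is used in the sequel.
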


\begin{proof} i) Obvious since $M^{\sf sl}$ is a quotient of $M$. 

ii) Say $a$ is nilpotent. That is, $a^n=0$ for some $n$. It follows that $a\sim_{\sf sl} 0$, implying that $q_1$ is injective. Bijectivity then follows from i).

On the other hand, if $q_1^{sl}$ is injective, it follows that $a\sim_{\sf sl} 0$ in $(M/(a))^{\sf sl}$ since $\sl(a)$ and $\sl(0)=0$ have the same images in $(M/(a))^{ sl}$. Thus $\sl(a)=0$, showing that $a$ is nilpotent.

iii) The quotient $m\sim m^2$ equates the subgroup of invertible elements to $1$. (This can be readily checked by the construction of the quotient $\sim_{\sf sl}$). As such, $M^{\sf sl}\rightarrow (M/(a\sim 1))^{\sf sl}$ is injective if $a$ is invertible.

Conversely, assume $M^{\sf sl}\rightarrow (M/(a\sim 1))^{\sf sl}$ is injective. The elements $a$ and $1$ have the same image in $(M/(a\sim 1))^{\sf sl}$ since they already are the same in $M/(a\sim 1)$. It follows that $\sl(a)$ and $\sl(1)$ have the same image in $M^{\sf sl}$. By the definition of the congruence $\sim_{\sf sl}$, $a$ is invertible.
\end{proof}

\subsection{On prime ideals}\label{1277prime}

Let us recall that an ideal $\p\neq M$ of a binoid $M$ is called \emph{prime} if $xy\in \p$, $x,y\in M$ implies that either $x\in \p$ or $y\in\p$. The set of all prime ideals is denoted by $\mspec(M)$. It is a poset with respect to inclusion and is empty if and only if $0=1$ in the binoid. In all other cases, it has a unique maximal ideal $\max(M)$, consisting of all non-invertible elements. If $M$ is integral, the set $\{0\}$ is prime and is clearly the smallest element of $\mspec(M)$. It follows that $\mspec(M)$ has only one element if and only if $M$ is grouplike.

There exists a natural bijection \cite{Holger}, \cite{spec}
$$\mspec(M)\xto{\simeq} \Hom_{\bf bn}(M,\{0,1\})$$
which sends a prime ideal $\p$ to the binoid homomorphism
$\chi_\p:M\to\{0,1\}$, defined by $$\chi_\p(m)=\begin{cases} 0, \ {\rm if} \ m\in \p,\\1, \ {\rm if} \ m\not \in \p.\end{cases}$$ 
If $M$ is finitely generated, then $\mspec(M)$ is finite. Moreover, $\mspec(M)\simeq(M^\bullet)^{\sf sl}$ if additionally $M$ is integral (see \cite{Holger}, \cite{spec}). Hence, Lemma \ref{quot2} can be used to estimate the cardinality of $\mspec(M)$. For example, if $a$ is not invertible in $M$, then
$$|\mspec(M)|>|\mspec(M/(a\sim 1))|$$
and if $a$ is not nilpotent, then
$$|\mspec(M)|>|\mspec(M/(a))|.$$

\subsection{Pivotal elements}\label{dis77}

For an element $x\in M$ we set
$$M^x:=\{y\in M| xy=x\}.$$
It is clear that $M^x$ is a submonoid of $M$. Call $x$ \emph{pivotal} if $x\neq 0$ and $M^x\neq\{1\}$. So, $M$ is separated if and only if $M$ has no pivotal elements. The set of all such elements is denoted by $\Piv(M)$. We also set
$$\sP(M):=\bigcup_{x\in \Piv(M)} M^x$$

\begin{Le}\label{d131.77} Let $M$ be an integral binoid. 
\begin{itemize}
	\item[i)] If $x\in \Piv(M)$ and $z\not =0$, then $xz\in \Piv(M)$. So, $\Piv(M)$ is an ideal of $M^\bullet$.
	\item[ii)] The set $\sP(M)$ is a subsemigroup of $M^\bullet$. Moreover, $\sP(M)$ is a submonoid if and only if $M$ is not separated.
	\item[iii)] Let $a$ be an invertible element of $M$ and $M'=M/(a\sim 1)$. For any element $x'\in \Piv(M')$, there exist $x\in \Piv(M)$, such that $q_2(x)=x'$ and for any $y'\in \sP(M')$, there exists $y\in \sP(M)$ such that $q_2(y)=y'$.
\end{itemize}
\end{Le}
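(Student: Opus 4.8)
The plan is to treat the three parts in order, leaning on integrality for (i) and (ii) and on the invertibility of $a$ for (iii). For (i), I would first note that $xz\neq 0$ whenever $x\neq 0$ and $z\neq 0$, which is exactly integrality, so the product of a pivotal element with a nonzero element is again nonzero. To see it is pivotal, pick $y\neq 1$ with $xy=x$ (available since $x\in\Piv(M)$) and compute $(xz)y=(xy)z=xz$, so the same witness $y\neq 1$ lies in $M^{xz}$, giving $M^{xz}\neq\{1\}$. As $z$ ranges over all of $M^\bullet$, this says precisely that $\Piv(M)\cdot M^\bullet\subseteq\Piv(M)$, i.e. $\Piv(M)$ is an ideal of $M^\bullet$ (it is contained in $M^\bullet$ since pivotal elements are nonzero by definition).

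For (ii), closure under multiplication is the analogue of the computation above. Given $y_1\in M^{x_1}$ and $y_2\in M^{x_2}$ with $x_1,x_2\in\Piv(M)$, I would take $x:=x_1x_2$, which is nonzero by integrality and pivotal by (i), and check $(x_1x_2)(y_1y_2)=(x_1y_1)(x_2y_2)=x_1x_2$, so $y_1y_2\in M^{x}\subseteq\sP(M)$; this shows $\sP(M)$ is a subsemigroup, and it sits inside $M^\bullet$ since $0\notin M^x$ for pivotal $x$ (otherwise $x=x\cdot 0=0$). For the ``moreover'', the only thing separating a subsemigroup from a submonoid is the unit $1$. If $M$ is separated then $\Piv(M)=\emptyset$, so $\sP(M)=\emptyset$ is not a submonoid; if $M$ is not separated, choose any pivotal $x$, and since $1\in M^x$ always holds, we get $1\in\sP(M)$, so the subsemigroup is in fact a submonoid.

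Part (iii) is where the work lies, and the key device is to use the invertibility of $a$ to correct a preimage by a power of $a$. Since $a$ is invertible, the congruence defining $M'=M/(a\sim 1)$ simplifies: $q_2(u)=q_2(v)$ forces $ua^m=va^n$ for some $m,n\geq 0$, hence $u=va^{k}$ with $k=n-m\in\Z$. For the first claim, take any preimage $x$ of $x'$; it is nonzero because $q_2$ preserves $0$ and $x'\neq 0$. Choosing a preimage $y$ of a witness $y'\neq 1$ for $x'$, I have $q_2(xy)=x'y'=x'=q_2(x)$, so $xy=xa^{k}$ for some $k\in\Z$; then $y'':=ya^{-k}$ satisfies $xy''=x$ and $q_2(y'')=y'\neq 1$ (because $q_2(a)=1$), so $y''\neq 1$ and $x\in\Piv(M)$. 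The second claim follows the same template: given $y'\in(M')^{x'}$ with $x'\in\Piv(M')$, pick a pivotal preimage $x$ of $x'$ as just constructed and a preimage $\tilde y$ of $y'$, use $q_2(x\tilde y)=x'=q_2(x)$ to write $x\tilde y=xa^{k}$, and set $y:=\tilde y a^{-k}$, so that $xy=x$ and $q_2(y)=y'$, whence $y\in M^x\subseteq\sP(M)$.

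The main obstacle is exactly the bookkeeping in (iii): a naive preimage of $y'$ need only satisfy $xy=x$ up to the congruence $a\sim 1$, and invertibility of $a$ is what lets me absorb the ambiguous power of $a$ to obtain an honest identity $xy=x$ in $M$ while keeping $q_2(y)=y'$ and $y\neq 1$. I would double-check at each use that the correcting factor $a^{-k}$ is legitimate (it is, since $a\in M^{\x}$) and that it leaves images under $q_2$ unchanged (it does, since $q_2(a)=1$).
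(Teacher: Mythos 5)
Your proposal is correct and follows essentially the same route as the paper: the same witness computations $(xz)y=xz$ and $(x_1x_2)(y_1y_2)=x_1x_2$ for (i) and (ii), the observation that $1\in\sP(M)$ exactly when $\Piv(M)\neq\emptyset$ for the ``moreover'', and in (iii) the identical device of lifting $y'$ and absorbing the ambient power of $a$ (the paper's $y=za^{m-n}$ is your $\tilde y a^{-k}$) to turn the congruence $xza^m=xa^n$ into an honest identity $xy=x$. Your write-up just makes explicit a few checks the paper leaves implicit ($xz\neq 0$, $y''\neq 1$).
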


\begin{proof} i) If $1\not =y\in M^x$, then $y\in M^{xz}$. Hence, $xz\in \Piv(M)$.

ii) If $y_1\in M^{x_1}$ and $y_2\in M^{x_2}$, then $y_1y_2\in M^{x_1x_2}.$  For the last assertion, observe that $1\in \sP(M)$ if and only if $\Piv(M)\not =\emptyset$. 

iii) Assume $x'y'=x'$ holds in $M'$ and $x'\not =0$, $y'\not =1$. Choose $x,z\in M$ such that $q_2(x)=x',q(z)=y'$. Then, $xza^m=xa^n$ for some $m,n\geq 0$. We obtain $xy=x$ since $a$ is invertible, where $y=za^{m-n}$ and $q(y)=q(z)=y'$.
\end{proof}

\subsection{The space $\kSpec(M)$}

For a binoid $M$ and a commutative ring $\k$, we denote by $\kSpec(M):=\Hom_{\bf bn}(M,\k)$ and call it the $\k$-\emph{spectrum} of $M$.

Assume $M$ is finitely generated with $\{e_1,\ldots, e_n\}$ as generators. Every morphism of binoids is defined by the values on its generators, which gives rise to the following embedding of $\kSpec(M)$ in $\k^n$:
$$\kSpec(M)  \ni  f\mapsto (f(e_1),\ldots, f(e_n))\in \k^n.$$
Thus, it can be thought of as the `set of zeros' of the associated binoid ring $\k[M]$. For $\k=\R, \mathbb{C}$, the binoid $\kSpec(M)$ inherits a subspace topology, which is independent of our choice of generators. In what follows, $\kSpec(M)$ is considered with the subspace topology of $\R^n$ and $\mathbb{C}^n$.

\begin{Le} \label{131.77} Let $M$ be a binoid and $\k$ a commutative ring.
\begin{itemize}
	\item[i)] One has
	$$\kSpec(M)=\k\textnormal{-}\Spec(\k[M]),$$
	where $\k\textnormal{-}\Spec(\k[M])$ is the set of the classical $\k$-valued points $\Hom_{\k\textnormal{-}\mathsf{Alg}}(\k[M],\k)$.
	\item[ii)] Let $\k$ be an integral domain $\k$. Than
	$$\kSpec(M)=\kSpec(M_{\sf red})$$
	\item[iii)] Let $M$ be integral. Then $\kSpec(M)$ is a topological monoid.
\end{itemize}
\end{Le}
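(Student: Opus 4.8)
For part (i), the plan is to invoke the defining adjunction of the binoid algebra. Viewing a commutative $\k$-algebra $A$ as a multiplicative binoid $(A,\cdot,1,0)$ gives a forgetful functor to ${\bf bn}$, and $M\mapsto \k[M]$ is its left adjoint: by construction $\k[M]$ is the monoid algebra on $M$ modulo the relation identifying the basis element of the absorbing element $0\in M$ with the zero of the ring. Concretely, a binoid homomorphism $f:M\to\k$ extends $\k$-linearly to a unique $\k$-algebra homomorphism $\tilde f:\k[M]\to\k$, $\sum a_m m\mapsto \sum a_m f(m)$, and conversely every $\k$-algebra homomorphism $\k[M]\to\k$ restricts, along the canonical map $M\to\k[M]$, to a multiplicative binoid homomorphism $M\to\k$. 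These assignments are mutually inverse, yielding the natural identification $\Hom_{\bf bn}(M,\k)=\Hom_{\k\textnormal{-}\mathsf{Alg}}(\k[M],\k)$.

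For part (ii), I would show that every $f\in\kSpec(M)$ annihilates the nilpotent ideal $I$ of $M$. If $a\in M$ is nilpotent, say $a^n=0$, then $f(a)^n=f(a^n)=f(0)=0$, and since $\k$ is an integral domain this forces $f(a)=0$. Hence $f$ factors uniquely through the Rees quotient $q:M\to M_{\sf red}=M/I$, giving a bijection $\kSpec(M_{\sf red})\to\kSpec(M)$, $g\mapsto g\circ q$. Choosing the images $q(e_1),\ldots,q(e_n)$ of a generating set of $M$ as generators of $M_{\sf red}$, this bijection sends the coordinate vector $(g(q(e_i)))_i$ to the identical coordinate vector, so the two spaces coincide as subsets of $\k^n$ and the identification is in particular a homeomorphism.

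For part (iii), the plan is to equip $\kSpec(M)$ with pointwise multiplication: for $f,g\in\kSpec(M)$ set $(f\cdot g)(m)=f(m)g(m)$. This is again a binoid homomorphism for \emph{any} $M$, since $f$ and $g$ are multiplicative and $\k$ is commutative, and it is continuous because it is coordinatewise multiplication in $\k^n$. Associativity and commutativity are inherited from $\k$. The one ingredient that genuinely uses integrality is the \emph{identity element}. I would take $\mathbf 1:M\to\k$ defined by $\mathbf 1(m)=1$ for $m\neq 0$ and $\mathbf 1(0)=0$, and verify $\mathbf 1\cdot f=f$ directly. The only nontrivial check is that $\mathbf 1$ is itself a binoid homomorphism: multiplicativity would fail exactly when $m,m'\neq 0$ but $mm'=0$, which cannot happen precisely because $M$ is integral. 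This is the crux of the statement — integrality of $M$ is what guarantees the neutral element of the monoid exists, whereas the remaining monoid axioms and the continuity of the multiplication hold in complete generality.
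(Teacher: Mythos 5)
Your proposal is correct and follows essentially the same route as the paper: (i) via the adjunction $\Hom_{\bf bn}(M,\k)=\Hom_{\k\textnormal{-}\mathsf{Alg}}(\k[M],\k)$, (ii) via the observation that an integral domain has no nonzero nilpotents so every point factors through $M_{\sf red}$, and (iii) via pointwise multiplication, where the paper phrases this as the natural monoid structure on $\Hom_{\bf Mon}(M^\bullet,\k)$ and you make explicit that integrality is needed precisely so that the unit $\mathbf 1$ is a binoid homomorphism — the same point the paper records in its subsequent remark.
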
 

\begin{proof} i) This is clear, since $M\rightarrow \k$ gives rise to a $\k$-algebra homomorphism $\k[M]\rightarrow \k$.

ii) Observe  that any binoid homomorphism takes nilpotent elements to nilpotent ones. Since $\k$ has no nonzero nilpotent element, any binoid homomorphism $M\rightarrow \k$ factors through $M_{\sf red}$ and the result follows.

iii) Since $M$ is integral, one has
$$\kSpec(M)=\Hom_{\bf Mon}(M^\bullet, \k).$$ 
The result follows from the fact that the set of all (commutative) monoid homomorphisms has a natural monoid structure.
\end{proof}

\begin{Rem}	Recall that a space $X$ is $n$-\emph{simple}, if $\pi_1X$ is abelian and acts trivially on $\pi_nX$, $n\geq 1$. The above lemma shows that $\kSpec(M)$ is in particular an $H$-space for an integral binoid $M$. Hence, it is $n$-simple for all $n\in \mathbb{N}$.

This is, however, no longer true in the non-integral case, as the set of binoid homomorphisms need not have a unit.
\end{Rem}

\subsection{Decomposition of a binoid by its idempotents}

Let $M$ be a commutative binoid. To simplify notations, we write $\Adm(M)$ instead of $\mspec(\ide(M))$. For an element $\r\in \Adm(M)$, we set $\r^{\sf c}=\ide(M)\setminus \r$. Thus for such $\r$ one obtains  a decomposition
$$\ide(M)=\r\amalg\r^{\sf c}.$$
Clearly $0\in \r$ and $1\in \r^{\sf c}$. Let $e\in\r$ and $m\in M$. Assume $e':=me$ is an idempotent. From $ee'=eme=me=e'$ and the fact that $\r$ is an ideal, we see that $e'\in \r$ as well. Hence, the only idempotents of the ideal $\r M\subseteq M$ are elements of $\r$. Though $\r$ is a prime ideal of $\ide(M)$, $\r M$ need not be a prime ideal.

We let $M(\r)$ be the quotient of $M$ by the relations $\r M\sim 0$ and $\r^{\sf c}\sim 1$. Thus, we can write
$$M({\r})=M/(\r M, \ \r^{\sf c}\sim 1).$$
The same monoid can be described as a push-out diagram in the category of binoids:
$$\xymatrix{\ide(M)\ar[d]^{\chi_\r}\ar[r] & M\ar@{-->}[d] \\ \{0,1\}\ar@{-->}[r] & M(\r)}$$

\begin{Le}\label{componentreduction} Let $M$ be finitely generated. One has
$$\kSpec(M)=\coprod\limits_{\r\in\Adm(M)} \kSpec(M(\r))$$
for any binoid $M$ and any integral domain $\k$, where the binoids $M(\r)$ have only trivial idempotents.
\end{Le}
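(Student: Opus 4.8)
The plan is to first establish the statement as a partition of sets, then upgrade it to a genuine topological disjoint union, and finally verify the claim about idempotents.

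For the set-theoretic partition, I would start from the fact that $\k$, being an integral domain, has exactly the two idempotents $0$ and $1$. Hence any $f\in\kSpec(M)=\Hom_{\bf bn}(M,\k)$ carries $\ide(M)$ into $\{0,1\}$, so that $f|_{\ide(M)}$ is a binoid homomorphism $\ide(M)\to\{0,1\}$. Under the bijection $\mspec(\ide(M))\xto{\simeq}\Hom_{\bf bn}(\ide(M),\{0,1\})$ recalled in Subsection \ref{1277prime}, this singles out a unique $\r_f:=\{e\in\ide(M)\mid f(e)=0\}\in\Adm(M)$, giving a map $\kSpec(M)\to\Adm(M)$ and thus a partition of $\kSpec(M)$ into its fibres. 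It then remains to identify the fibre over $\r$ with $\kSpec(M(\r))$. Using the push-out (equivalently, double-quotient) description $M(\r)=M/(\r M,\ \r^{\sf c}\sim 1)$, a homomorphism $M\to\k$ factors through $M(\r)$ if and only if it kills $\r M$ and sends $\r^{\sf c}$ to $1$. Since $\r\subseteq\r M$ (because $1\in M$) while $f(e)=0$ forces $f(eM)=0$, these conditions are precisely $f(\r)=\{0\}$ and $f(\r^{\sf c})=\{1\}$, i.e. exactly the fibre over $\r$. This yields $\kSpec(M)=\coprod_{\r}\kSpec(M(\r))$ as sets.

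To see that this is a decomposition of topological spaces, and not merely a set partition, I would argue that each piece is both open and closed. Since $M$ is finitely generated, $\ide(M)$ is finite and hence $\Adm(M)$ is finite. For a fixed idempotent $e$, written as a monomial in the generators $e_1,\dots,e_n$, the evaluation $f\mapsto f(e)$ is a monomial in the coordinate functions of the embedding $\kSpec(M)\hookrightarrow\k^n$, hence continuous. Therefore each fibre $\kSpec(M(\r))$, being cut out by the closed conditions $f(e)=0$ for $e\in\r$ and $f(e)=1$ for $e\in\r^{\sf c}$, is closed in $\kSpec(M)$. A finite partition into closed subsets forces each subset to be open as well, so the subspace topology agrees with the coproduct topology.

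Finally, for the idempotent claim, I would use that the natural quotient maps are surjective on idempotents. Building $M(\r)$ from $M$ by the successive Rees quotients (Lemma \ref{quot1} i)) imposing $\r M\sim 0$ and the successive quotients (Lemma \ref{quot1} ii)) imposing $\r^{\sf c}\sim 1$ — finitely many steps, as $\ide(M)$ is finite — the induced map $\ide(M)\to\ide(M(\r))$ is surjective. Every idempotent of $M(\r)$ is therefore the image of some $e\in\ide(M)=\r\amalg\r^{\sf c}$; if $e\in\r$ its image is $0$, and if $e\in\r^{\sf c}$ its image is $1$. Hence $\ide(M(\r))=\{0,1\}$, as claimed. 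The universal-property bookkeeping and the continuity of idempotent evaluations are routine; the point requiring the most care is the topological upgrade, since the statement asserts an honest coproduct of spaces, so the clopen-ness of the pieces (via finiteness of $\Adm(M)$ together with continuity of the idempotent evaluations) is the real content beyond the set-level partition.
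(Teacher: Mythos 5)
Your proof is correct and follows essentially the same route as the paper: the bijection comes from restricting $f$ to $\ide(M)$ to pick out $\r_f\in\Adm(M)$ and observing that $f$ factors through $M(\r_f)$, and the triviality of $\ide(M(\r))$ is obtained by applying Lemma \ref{quot1} step by step to lift idempotents. The only difference is that you additionally spell out the topological clopenness of the pieces (via continuity of idempotent evaluation and finiteness of $\Adm(M)$), which the paper leaves implicit.
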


\begin{proof} Take  $\phi:M(\r) \to \k$, where $\r\in \Adm(M)$ and precompose it with the canonical homomorphism $M\rightarrow M(\r)$ to obtain an element in $\kSpec(M)$. Conversely, take an element $\psi:M\rightarrow \k$ from the LHS. It sends any idempotent to either $0$ or $1$. Denote by $\r$ the set of idempotents $e\in {\sf Idem}(M)$, for which $\psi(e)=0$. Then $\r\in \Adm(M)$ and  $\psi$ factors through $M(r)$. The claim follows.
	
For the last assertion, observe that one can use  Lemma \ref{quot1} step by step to show that any idempotent of $M(\r)$ has a lifting to an idempotent of $M$. The claim follows as any idempotent of $M$ maps to either $0$ or $1$ in $M(\r)$.
\end{proof}

We now consider the functorial behaviour of this decomposition. Let $f:M\to M'$ be a binoid  homomorphism. Since $\ide$ (resp. $\mspec$) is a covariant (resp. contravariant) functor, we see that $\Adm(-):=\mspec(\ide(-))$ is a contravariant functor. Thus, for any $\r'\in \Adm(M')$, we have $$f^\smalltriangleleft(\r')=\{e\in \ide(M)| f(e)\in \r'\}\in \Adm(M),$$
and there exists a unique homomorphism of  binoids $f_{\r}:M(f^\smalltriangleleft(\r'))\to M'(\r'))$ for which the diagram 
$$\xymatrix{ M\ar[r]\ar[d]_f & M(f^\smalltriangleleft(\r'))\ar[d]^{f_{\r}}\\
			 M'\ar[r] & M'(\r') }$$
commutes. Here, the horizontal arrows are the canonical maps to the quotient. To see this, observe that $M'(\r')=M'/\mathcal{K}'$. where $\mathcal{K}'$ is the congruence given by $\r\sim 0$ and $\r^{\sf c}\sim 1$. On the other hand, $M(f^\smalltriangleleft(\r'))=M/\mathcal{K}$, where $\mathcal{K}$ is given by $\{e\in \ide(M)|f(e)\in\r\}\sim 0$ and $\{e\in\ide(M)|e\in\r^{\sf c}\}\sim 1$. The induced diagram
$$\xymatrix@!C{\kSpec(M') \ar[d]_{f^*}\ar[r]^{=\hspace{2em}} &  \coprod\limits_{\r'\in\Adm(M')} \kSpec(M'(\r'))\ar[d]^{f^*_{\r}}\\
\kSpec(M) \ar[r]^{=\hspace{2em}} &  \coprod\limits_{\r\in\Adm(M)} \kSpec(M(\r))}$$
also commutes, where the right vertical morphism maps the component corresponding to $\r'$ to the component corresponding to $f^\smalltriangleleft(\r')$.

\subsection{Comparison maps and the main theorem}\label{binoidsuse}

Recall that  $M^{\x}$ denotes the set of all invertible elements of a binoid $M$, together with the absorbing element $0$. We have the following morphisms in the category ${\bf bn}$ of binoids:
$$M^{\x}\xrightarrow{i} M\xrightarrow{j}M^{\x}.$$
Here, $i$ is the natural inclusion, while $j$ is the canonical projection onto the quotient $M^{\x}\simeq M/M_+$. We denote by $M_+$ the maximal ideal of $M$, which consists of the non-invertible elements of $M$. Concretely, the homomorphism $j$ is given by
$$j(m)=\begin{cases} m,  & {\rm if} \ m\in M^{\x}\\0,& {\rm if} \ m\not \in M^{\x}.\end{cases}$$ 
Since $j\circ i=\id_{M^{\x}}$, we have $i^*\circ j^*=\id_{\kSpec(M^{\x})}$ for the induced continuous maps
$$\kSpec(M^{\x})\xleftarrow{i^*} \kSpec(M)\xleftarrow{j^*} \kSpec(M^{\x}).$$
This enables us to identify the space $\kSpec(M^{\x})$ as a subspace of $\kSpec(M)$ via $j^*$. A point $p: M\rightarrow \k$ of the space $\kSpec(M)$ belongs to $\kSpec(M^{\x})$ under this identification if and only if $p(m)=0$ for any $m\not \in M^{\x}$.

We can now state the main result of this section.

\begin{Th}\label{reductiontogroupsnonsep} Let $\k=\R,\C$ and $M$ be a finitely generated binoid. There exists a homotopy equivalence
$$\kSpec(M)\simeq\coprod\limits_{\r\in \Adm(M)} \kSpec((M(\r)^{\x}),$$
which is compatible with the decomposition given in \ref{componentreduction}, and which is induced by $j^*_\r:\kSpec((M(\r))^{\x})\rightarrow \kSpec(M(\r))$ on each summand.
\end{Th}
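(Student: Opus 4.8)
The plan is to dispose of the idempotents first, and then reduce everything to the grouplike case by an induction on the number of prime ideals. By Lemma \ref{componentreduction} we already have a homeomorphism $\kSpec(M)=\coprod_{\r\in\Adm(M)}\kSpec(M(\r))$ with each $M(\r)$ having only trivial idempotents, and the right-hand side of the theorem is the corresponding coproduct for the binoids $M(\r)^{\x}$. Hence it suffices to treat one summand: writing $N:=M(\r)$, I must show that $j^{*}\colon\kSpec(N^{\x})\to\kSpec(N)$ is a homotopy equivalence. Since $j\circ i=\id$ yields $i^{*}\circ j^{*}=\id$, it is enough to produce a homotopy $j^{*}\circ i^{*}\simeq\id_{\kSpec(N)}$. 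Concretely $j^{*}i^{*}$ sends a point $p$ to the point agreeing with $p$ on $N^{\x}$ and vanishing off $N^{\x}$, so its image is exactly the subspace $\kSpec(N^{\x})\subseteq\kSpec(N)$ of points supported on the units; what I really want is a \emph{strong deformation retraction} of $\kSpec(N)$ onto this subspace, since such a retraction is automatically compatible with the decomposition and realises the equivalence through $j^{*}_{\r}$.

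For the base case I would take $N$ integral and separated and build the retraction from a positive grading. Separatedness forces the divisibility preorder on $N^{\bullet}$ to be a partial order modulo units and, in particular, forbids any relation $xy=x$ with $x\neq0$ and $y$ a non-unit; together with finite generation this produces a monoid homomorphism $\deg\colon N^{\bullet}\to(\R_{\ge0},+)$ with $\deg^{-1}(0)=(N^{\x})^{\bullet}$, i.e. degree zero precisely on the units. In terms of generators this says every defining relation is homogeneous, so the coordinatewise scaling
$$H\big((z_{1},\dots,z_{n}),t\big)=\big(t^{\deg e_{1}}z_{1},\dots,t^{\deg e_{n}}z_{n}\big)$$
stays inside $\kSpec(N)\subseteq\k^{n}$; equivalently, using that $\kSpec(N)$ is a topological monoid (Lemma \ref{131.77}), $H(p,t)=p\cdot u_{t}$ for the path of points $u_{t}(m)=t^{\deg m}$ running from the all-ones identity at $t=1$ to the point that is $1$ on units and $0$ off units at $t=0$. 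At $t=0$ every non-unit coordinate is killed, so $H(-,0)=j^{*}i^{*}$ lands in $\kSpec(N^{\x})$, while $\deg=0$ on units keeps the retraction stationary on $\kSpec(N^{\x})$ for all $t$. This is the desired strong deformation retraction.

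To remove the two hypotheses I would induct on $|\mspec(N)|$, the base being $|\mspec(N)|=1$, where $N$ is grouplike, $N=N^{\x}$, and there is nothing to prove. If $N$ is not integral I first replace it by $N_{\sf red}$ (harmless by Lemma \ref{131.77}) so that nonzero elements are non-nilpotent, and then cover $\kSpec(N)$ by the closed subspaces $\kSpec(N/\p_{i})$ over the minimal primes $\p_{i}$: for $\k$ a field the fibre $p^{-1}(0)$ of any point is a prime ideal, hence contains some $\p_{i}$, so $p$ factors through $N/\p_{i}$. Each $N/\p_{i}$ is integral with strictly fewer primes, the pairwise overlaps are again quotients of the same type, and the induction hypothesis applies. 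If $N$ is integral but not separated, Lemma \ref{d131.77} gives a pivotal relation $xy=x$ with $y\neq1$, which forces $p(x)=0$ or $p(y)=1$ on every point; this yields the closed cover $\kSpec(N)=\kSpec(N/(x))\cup\kSpec(N/(y\sim1))$. When $y$ is a non-unit both quotients have strictly fewer prime ideals by the inequalities of Subsection \ref{1277prime}, so the induction applies; the remaining possibility, that every pivotal witness $y$ is invertible, is controlled by Lemma \ref{d131.77}(iii) and in fact still admits a positive grading, so the base-case argument applies directly. In each case I glue the deformation retractions on the covering pieces into one on the union.

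The \emph{main conceptual obstacle} is the base case: establishing that an integral separated finitely generated binoid is positively gradable with degree zero \emph{exactly} on the units, so that the scaling retraction lands on $\kSpec(N^{\x})$ and on nothing larger. The \emph{main technical obstacle} is the gluing in the inductive step, since homotopy equivalences do not glue in general: I must arrange the strong deformation retractions on the covering pieces to agree on the closed overlaps and to be compatible with all the relevant $j^{*}$, and verify the cofibration (NDR) conditions—available here because these real or complex algebraic sets are triangulable—that make a closed cover glueable. The subtle point throughout is keeping the unit groups $(N/\p_{i})^{\x}$ and $(N/(y\sim1))^{\x}$ matched with the retractions on the overlaps, so that the assembled equivalence is precisely the one induced by $j^{*}_{\r}$.
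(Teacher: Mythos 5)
Your overall strategy coincides with the paper's: strip idempotents via Lemma \ref{componentreduction}, handle the integral separated case by the scaling retraction $p\mapsto t^{\delta(-)}p(-)$ coming from a local grading (this is exactly Proposition \ref{reductiontogroup}, with the grading supplied by \cite[Theorem 5.2.7]{S.Bot} via Lemma \ref{grading}), and then induct on $|\mspec(M)|$ using two-piece closed covers $\kSpec(M/(x))\cup\kSpec(M/(y\sim 1))$ in the non-separated case and a zero-divisor decomposition in the non-integral case. Two of your choices genuinely diverge. First, for non-integrality the paper covers by just the two pieces $M/(x)$ and $M/(y)$ for a single relation $xy=0$, whereas you cover by all minimal primes; your version is correct set-theoretically, but it forces you to glue over a cover with possibly more than two closed pieces, where the two-piece gluing lemma no longer applies directly. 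Second, in the residual non-separated case where every pivotal witness is invertible, the paper runs a second induction on the number of generators of the group $\sP(M)$ (Section \ref{sep}, Case 2), while you observe that $M/(M^{\x}\sim 1)$ is then separated (by the lifting statement of Lemma \ref{d131.77}(iii)) so the grading argument applies directly. This observation is correct and is arguably a simplification of the paper's argument, though you should actually verify the separatedness of the quotient rather than assert it.

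The one place where your proposal has a real soft spot is the gluing step, which you correctly identify as the main technical obstacle but do not resolve. Your plan is to glue \emph{strong deformation retractions} on the two (or more) closed pieces into one on the union, which requires the retractions to agree on the overlaps. The inductive hypothesis, however, only delivers homotopy equivalences $j^*$ on each piece --- the retractions it produces come from different gradings or from further gluings, and there is no reason for them to restrict compatibly to $\kSpec(M_{01})$. The paper avoids this entirely: since the quotient maps induce closed embeddings, hence cofibrations, both the bottom square ($\kSpec(M_{01})$, $\kSpec(M_0)$, $\kSpec(M_1)$, $\kSpec(M)$) and the top square are homotopy pushouts, and the gluing theorem for homotopy equivalences between homotopy pushouts (\cite[Theorem 6.2.8]{arkowitz}) concludes that $i^*$ is a homotopy equivalence from the fact that $i_0^*$, $i_1^*$, $i_{01}^*$ are --- no compatibility of retractions is needed, only commutativity of the cube. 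You should replace ``arrange the SDRs to agree on overlaps'' with this cube argument; as stated, that step of your proof would not go through.
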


The proof is based on several reductions. We will first check it under the assumption that $M$ is integral and separated in Section \ref{527,77}. We will then use an induction argument on the number of prime ideals to prove the Theorem in its full generality.

The latter part itself will be done in two independent steps. In the first one, we will get rid of the assumption that $M$ is separated, using another induction argument. Finally, we will also get rid of the assumption that $M$ is integral.

\subsection{The case of separated and integral binoids}\label{527,77} 

We start by recalling the following result:

\begin{Le}[\cite{S.Bot}, Theorem 5.2.7]\label{grading} Let $M$ be an integral and separated binoid. There exists a local monoid homomorphism
$$\delta: M^\bullet \rightarrow \N,$$
called the \emph{grading} of $M$. By local we mean that non-invertible elements map to non-invertible elements, and hence, $\delta(x)=0$ if and only if $x$ is invertible in $M$.
\end{Le}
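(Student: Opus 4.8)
The plan is to realise the grading as the pull-back, along the group completion of $M^\bullet$, of a strictly positive linear functional on a sharp (pointed) cone. Write $N:=M^\bullet$, which is a finitely generated commutative monoid since $M$ is integral, and let $U:=M^{\x}\setminus\{0\}$ be its unit group. Let $\phi\colon N\to G$ be the group completion of $N$, with $G$ a finitely generated abelian group, and let $\bar\phi\colon N\to\Z^r$ be $\phi$ followed by the projection $G\to G/\mathrm{tors}(G)\cong\Z^r$. Denote by $\Lambda'\subseteq\Z^r$ the saturation of the subgroup generated by $\bar\phi(U)$, put $Q:=\Z^r/\Lambda'$, let $\psi\colon N\to Q$ be the composite, and set $S:=\psi(N)$, a finitely generated submonoid of the lattice $Q$. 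Any group homomorphism $Q\to\Z$ that is nonnegative on $S$ will, after composition with $\psi$, produce a monoid homomorphism $N\to\N$, so the whole problem is to choose such a functional with the correct vanishing locus.

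The crucial step, and the only place where the hypotheses enter, is to show that \emph{separatedness forces} $\psi^{-1}(0)=U$ \emph{and hence that} $S$ \emph{is sharp}, i.e. $S\cap(-S)=\{0\}$. For this, recall that in a commutative monoid $\phi(x)=\phi(u)$ holds in $G$ if and only if $xs=us$ for some $s\in N$. If $u\in U$, multiplying by $u^{-1}$ gives $sy=s$ with $y:=xu^{-1}\in N$; since $s\neq 0$, the defining property of separatedness ($s=sy\Rightarrow s=0$ or $y=1$) yields $y=1$, so $x=u\in U$. Passing to a power handles the torsion and the saturation: if $\bar\phi(x^{m})$ lies in the subgroup generated by $\bar\phi(U)$, then $x^{m}$ is a unit, and $x^{m}$ being a unit forces $x$ to be one. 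Thus $\psi^{-1}(0)=U$. Sharpness then follows formally: if $\psi(x)\in S$ and $-\psi(x)=\psi(y)\in S$, then $\psi(xy)=0$, so $xy\in U$, whence $x\in U$ and $\psi(x)=0$.

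Finally, a finitely generated sharp commutative submonoid $S$ of a lattice generates a rational polyhedral cone $C=\R_{\geq 0}S$ that is pointed (contains no line); by a standard separation argument (a pointed rational cone lies in an open half-space bounded by a rational hyperplane meeting $C$ only at the origin) there is a rational linear functional strictly positive on $C\setminus\{0\}$, and clearing denominators gives a homomorphism $\bar\lambda\colon Q\to\Z$ with $\bar\lambda>0$ on $S\setminus\{0\}$ and $\bar\lambda\geq 0$ on $S$. Setting $\delta:=\bar\lambda\circ\psi\colon N\to\Z$ yields a monoid homomorphism with values in $\N$, and $\delta(x)=0$ if and only if $\psi(x)=0$, that is, if and only if $x\in U$; this is exactly the asserted locality. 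The main obstacle is the crucial step of the second paragraph: everything hinges on upgrading the elementary implication $s=sy\Rightarrow y=1$ to the statement that $U$ is \emph{saturated} in $N$, since without this a non-unit could be forced into the kernel of every candidate grading and no local $\delta$ could exist.
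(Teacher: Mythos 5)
Your proof is correct, but it takes a genuinely different route from the paper. The paper disposes of this lemma in three lines: it invokes \cite[Theorem 5.2.7]{S.Bot}, which supplies the grading for integral separated binoids with trivial unit group, and reduces the general case to that one by composing with the canonical surjection $M\rightarrow M/(M^{\x}\sim 1)$, observing that this quotient has no non-trivial units. You instead give a self-contained construction: pass to the group completion of $M^\bullet$, use separatedness to show that the preimage of the (saturated) image of the unit group is exactly the unit group --- the computation $xs=us\Rightarrow (xu^{-1})s=s\Rightarrow xu^{-1}=1$ is precisely where the hypothesis enters, and is correct --- deduce that the image monoid $S$ in the lattice $Q$ is sharp, and then separate the pointed rational cone $\R_{\geq 0}S$ from the origin by an integral functional. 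All the steps check out: $M^\bullet$ is a finitely generated monoid because $M$ is integral and finitely generated; the passage to a power correctly handles both the torsion of the group completion and the saturation of the unit lattice (if $x^{mk}$ is a unit, so is $x$); and sharpness of a finitely generated submonoid of a lattice does imply pointedness of the generated cone, since the lineality space of a rational polyhedral cone is rational, so a line would produce nonzero integral points $v,-v$ with positive integer multiples in $S$. What your approach buys is independence from the cited thesis and an explicit picture of the grading as a strictly positive functional on a pointed cone; what the paper's approach buys is brevity, at the cost of leaning on an external result whose proof (for the sharp case) is presumably close in spirit to your cone argument anyway. The only cosmetic complaint is your closing sentence, which reads as planning commentary rather than proof and should be folded into the second paragraph or deleted.
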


\begin{proof} This was proven in \cite[Theorem 5.2.7]{S.Bot} for the case when $M$ has no non-trivial invertible elements. As such, all we have to do is compose it with the canonical surjective homomorphism $q:M\rightarrow M/M^{\x}\sim 1$. Here $q(x)=q(y)$ if and only if there exists an invertible element $u\in M$, such that $y=ux$. It is clear that $M/M^{\x}\sim 1$ has no non-trivial invertible elements, reducing it to the cited theorem.
\end{proof}

The next result shows that Theorem \ref{reductiontogroupsnonsep} is true for integral and separated binoids.
\begin{Pro}\label{reductiontogroup} Let $\k=\R, \mathbb{C}$ and $M$ be an integral and separated binoid. The topological space $\kSpec(M^{\x})$ is a deformation retract of $\kSpec(M)$.
\end{Pro}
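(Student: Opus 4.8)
The plan is to construct an explicit deformation retraction from $\kSpec(M)$ onto its subspace $\kSpec(M^{\x})$, which sits inside as the set of points killing all non-invertible elements (via $j^*$). The key tool is the grading $\delta:M^\bullet\to\N$ provided by Lemma \ref{grading}, which exists precisely because $M$ is integral and separated. Since $M$ is integral, Lemma \ref{131.77}(iii) tells us that $\kSpec(M)=\Hom_{\bf Mon}(M^\bullet,\k)$ is a topological monoid, and I would exploit this multiplicative structure to define the retraction.

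The central idea is to use the grading to ``scale away'' the non-invertible directions. Given a point $p\in\kSpec(M)$ and a scalar $t\in[0,1]$, I would define a new point $p_t$ by setting $p_t(m)=t^{\delta(m)}p(m)$ on generators (or on all of $M^\bullet$, extended by $0$). Because $\delta$ is a monoid homomorphism, $t^{\delta(mm')}=t^{\delta(m)}t^{\delta(m')}$, so $p_t$ is again a binoid homomorphism $M\to\k$; the multiplicative structure of $\k$ is what makes this work. One should verify that $(t,p)\mapsto p_t$ is continuous in the subspace topology inherited from $\k^n$, which amounts to checking that $t^{\delta(e_i)}p(e_i)$ depends continuously on $(t,p)$ on each generator. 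At $t=1$ we recover $p$, giving the identity. As $t\to 0$, the factor $t^{\delta(m)}$ tends to $0$ exactly when $\delta(m)>0$, i.e.\ when $m$ is non-invertible, while it stays constant ($=1$, so $p_0(m)=p(m)$) on the invertible elements where $\delta=0$. Thus $p_0$ kills every non-invertible element and lands in $\kSpec(M^{\x})$, and moreover $p_0=p$ already whenever $p\in\kSpec(M^{\x})$, so the homotopy fixes the subspace pointwise throughout. This is what makes it a (strong) deformation retraction rather than a mere homotopy equivalence.

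The steps in order are: first fix a generating set $\{e_1,\dots,e_n\}$ and recall the embedding into $\k^n$; second, invoke Lemma \ref{grading} to obtain $\delta$ and note $\delta(x)=0\iff x\in M^{\x}$; third, define $H(p,t)=p_t$ as above and check it is well defined as a binoid homomorphism using that $\delta$ is a homomorphism and that the absorbing element is preserved (here one must be careful about $0^0$ conventions at $t=0$ and about whether $p(m)=0$ already, but integrality keeps this clean); fourth, verify continuity of $H$ on $\kSpec(M)\times[0,1]$; and fifth, check the three retraction axioms $H(-,1)=\id$, $H(-,0)$ maps into $\kSpec(M^{\x})$, and $H(-,t)$ restricted to $\kSpec(M^{\x})$ is the identity for all $t$.

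The main obstacle I anticipate is the behaviour at the endpoint $t=0$ and the continuity there. The expression $t^{\delta(m)}$ is continuous on $[0,1]$ for each fixed exponent $\delta(m)\ge 0$ (with the convention $0^0=1$ for invertible $m$), but one must confirm that the resulting map $p_0$ genuinely satisfies $p_0(m)=0$ for all non-invertible $m$ and equals $p$ on units simultaneously and compatibly, so that $p_0$ is an honest point of $\kSpec(M^{\x})\hookrightarrow\kSpec(M)$ under the $j^*$ identification. A second, subtler point is ensuring the homotopy respects the identification of $\kSpec(M^{\x})$ as a subspace via $j^*$ in exactly the sense needed later: the retraction $r=H(-,0)$ should agree with $i^*$ under $j^*\circ i^*$, so that the deformation retract structure is compatible with the splitting $i^*\circ j^*=\id$ recorded before the theorem. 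Verifying this compatibility is where the multiplicative bookkeeping must be done carefully, though it is conceptually straightforward once the grading is in hand.
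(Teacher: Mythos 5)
Your proposal is correct and follows essentially the same route as the paper: both use the grading $\delta:M^\bullet\to\N$ from Lemma \ref{grading} to define the homotopy $F(p,t)(m)=t^{\delta(m)}p(m)$ (with the convention $0^0=1$), and both verify that $F(-,1)=\id$, that $F(-,0)$ lands in $\kSpec(M^{\x})$, and that points of $\kSpec(M^{\x})$ are fixed. Your additional observation that the retraction is in fact strong (the subspace is fixed for all $t$, since $p(m)=0$ on non-units forces $t^{\delta(m)}p(m)=0$) is a harmless, correct refinement of what the paper states.
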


\begin{proof} We have $\kSpec(M)=\Hom_{\bf Mon}(M^\bullet, \k)$ since $M$ is integral. Consider the monoid homomorphism
$$M^\bullet\rightarrow M^\bullet\times\N,$$
given by $m\mapsto(m,\delta(m))$, where $\delta$ is the grading given in Lemma \ref{grading}. A $\k$-Point of $M^\bullet\times\N$ is the same as a $\k$-point $p$ of $M^\bullet$ and a $\k$-point of $\N$. The latter corresponds to an element of $\k$. The $\k$-point $(p,t):M^\bullet\times\N\rightarrow \k$ sends $(m,e)$ to $t^ep(m)$. This induces the continues map
$$\kSpec(M)\times \mathbb{A}^1\rightarrow \kSpec(M),$$
which sends $(p,t)$ to the monoid homomorphism
$$F(p,t):M^\bullet \rightarrow \k,$$
given by
$$F(p,t)(m)=t^{\delta(m)}p(m).$$
Assume that $0^0=1$. We obtain the map
$$F:\kSpec(M)\times [0,1]\rightarrow \kSpec(M).$$
Clearly, $F(p,1)=p$ and hence, $F(-,1)=\id_{\kSpec(M)}$. On the other hand, if $t=0$ we have
$$F(p,0)(m)=\begin{cases} 0, & {\rm if} \ \delta(m)\not =0\\ p(m), & {\rm if} \ \delta(m)=0.\end{cases}$$
But, $\delta(m)=0$ if and only if $m$ is invertible. We rewrite the above as
$$F(p,0)(m)=\begin{cases} 0, & {\rm if} \ m \ {\rm is  \ not  \ invertible}\\ p(m), & {\rm if} \ m \ \ {\rm is    \ invertible}. \end{cases}$$
This shows that $F(p,0)\in \kSpec(M^{\x})$ and if $p\in \kSpec(M^{\x})$, we have $p(m)=0$ for all non-invertible $m$. Thus, $F(p,0)=p$ for such $p$. This finishes the proof.
\end{proof}

\begin{Rem}	 It follows from the proof that the map $j^*:\kSpec(M^{\x})\to \kSpec(M)$ is  a homotopy equivalence. Since $i^*\circ j^*=\id_{\kSpec(M^{\x})}$, we see that $i^*:\kSpec(M)\to \kSpec(M^{\x})$ is also a homotopy equivalence.
\end{Rem}

\subsection{Proof of Theorem \ref{reductiontogroupsnonsep}} We will use induction on $\ell=|\mspec(M)|$, the number of prime ideals of our binoid $M$. The base step $\ell=1$ holds because in this case, the maximal ideal consists of nilpotent elements only. Hence, the reduction $M_{\sf red}$ of $M$ is grouplike, see Section \ref{1277prime}.

Assume that Theorem \ref{reductiontogroupsnonsep} is true for all binoids $M$ for which $|\mspec(M)|<\ell$. We may assume that $0$ is the only nilpotent element of $M$, and that it only has trivial idempotents, using Lemmas \ref{131.77} and \ref{componentreduction}.

\subsubsection{Separatedness}\label{sep}

We want to get rid of the assumption that $M$ is separated, but we need to introduce some notations to proceed further.
We showed that Theorem \ref{reductiontogroupsnonsep} is true if $M$ is separated in Proposition \ref{reductiontogroup}. As such, we can assume that $\Piv(M)\neq\emptyset$. Take any $x\in \Piv(M)$ and $y\neq 1\in M^x$. Hence, $xy=x$ and $x\not =0, y\neq 1$. In particular, $x$ is not invertible. Since $M$ has only trivial idempotents, $y\neq x$. Set 
$$M_0:=M/(x), \quad M_1:=M/(y\sim 1), \quad M_{01}:=M_0/(y\sim 1)\simeq M_1/(x).$$

We have the following commutative diagram of binoids
$$\xymatrix{ & & M_1\ar[rd] &\\
			 M\ar[rru]\ar[rd] & & & M_{01},\\
			 & M_0\ar[rru] & & }$$
where the arrows are the natural projections onto the quotients. It follows from part i) of Lemma \ref{quot1} that $M_0$ has no non-trivial idempotents in this case.

We claim that the induced map $\ide(M_1)\to \ide(M_{01})$ is a bijection. It is surjective by part 1 of Lemma  \ref{quot1} and all the fibres are singletons, except maybe the preimage of zero. Take an idempotent $q_1(a)\in M_1$ which maps to zero in $M_{01}$. Then, $q_1(a)=q_1(x)q_1(b)$ in $M_1$, for some $b\in M$. It follows that there are $n,m\geq 0$ such that $x^2b^2y^m=xby^n$. We have $xy^k=x$ for all $k$ as $xy=x$ and so $x^2b^2=xb$, making $xb$ is an idempotent of $M$. Our assumption implies that either $xb=0$, or $xb=1$. The last equality is impossible, because $x$ is not invertible. Hence, $xb=0$ and $q_1(a)=0$, yielding the claim.

Since the assignments $M\mapsto \kSpec(M)$ and $M\mapsto \coprod_\r\kSpec(M(\r)^{\x})$ are (contravariantly) functorial, one obtains the following commutative diagram:

$$\xymatrix{ & & \coprod_{\r}\kSpec(M_{1}(\r)^{\x})\ar[dr]\ar@<-.5ex>[dd]_{j_1^*} &\\ \coprod_{\r}\kSpec(M_{01}(\r)^{\x})\ar[dr]\ar[urr]\ar@<-.5ex>[dd]_{j_{01}^*} & & & \kSpec(M^{\x})\ar@<-.5ex>[dd]_{j_1^*}\\
& \kSpec(M_0^{\x})\ar[urr]\ar@<-.5ex>[dd]_{j_1^*} & \kSpec(M_1)\ar@<-.5ex>[uu]_{i_1^*}\ar[dr] &\\
\kSpec(M_{01}) \ar@<-.5ex>[uu]_{i_{01}^*} \ar[urr]\ar[dr] & & & \kSpec(M) \ar@<-.5ex>[uu]_{i^*}\\
& \kSpec(M_0).\ar[urr]\ar@<-.5ex>[uu]_{i_0^*} & & }$$
Here, $\r$ is running through the set of admissible decompositions of $\ide(M_1)=\ide(M_{01})$. One has $|\mspec(M_0)|<\ell$ according to Lemma \ref{quot2} and Section \ref{1277prime}. It follows that $|\mspec(M_{01})|\leq |\mspec(M_0)|<\ell$. By the induction assumption, $i^*_0$ and $i^*_{01}$ are homotopy equivalences, with $j^*_0$ and $j^*_{01}$ as their respective homotopy inverses.

Since the maps $M\rightarrow M_0$, $M\rightarrow M_1$, $M_0\rightarrow M_{01}$ and $M_1\rightarrow M_{01}$ are canonical maps to a quotient, it follows that all  maps in the bottom plane are closed embeddings. As such, they are also cofibrations and hence, the pushout diagram is in particular a homotopy pushout. One easily sees that 
$$\kSpec(M)=\kSpec(M_0)\cup \kSpec(M_1), \quad \kSpec(M_{01})=\kSpec(M_0)\cap \kSpec(M_1).$$
Our next observation is that the quotient map $M\to M_0=M/(x)$ yields an isomorphism $M^{\x}\to M_0^{\x}$ and hence, $\kSpec(M_0^{\x})\rightarrow \kSpec(M^{\x})$ is a homeomorphism. The map $\kSpec(M_{01}^{\x})\rightarrow \kSpec(M_1^{\x})$ is also a homeomorphism, by the same reasoning. Thus, the top plane is also a homotopy pushout.

It follows from \cite{arkowitz}[Theorem 6.2.8] that the map $i^*:\kSpec(M)\rightarrow \kSpec(M^{\x})$ is a homotopy equivalence, if $i^*_{1}$ is. For this, we consider the following 2 cases:

\underline{Case 1}. Assume that there exist a pivotal element $x\in \Piv(M)$, for which $M^x$ is not a group. We can postulate in the above consideration that $y$ is a non-invertible element. One has $|\mspec(M_1)|<\ell$ by Lemma \ref{quot2} iii) and so, $i_1^*$ is a homotopy equivalence by the induction assumption. This implies that $i^*$ is also a homotopy equivalence as already observed, proving the induction step in this case.

\underline{Case 2}. It remains to consider the case, when for all $x\in \Piv(M)$, the monoid $M^x$ is a group. In this case, $|\mspec(M_1)|=\ell$ for any choice of $x$ and $y$. So we can't use the induction step to analyse $i_1^*$.

The key idea is to use the monoid $\sP(M)$ (see Section \ref{dis77}). It is a group in this case because $\sP(M)$ is a union of subgroups under our assumption. It is furthermore finitely generated since $\sP(M)$ is a subgroup of the group of invertible elements of $M$, which itself is the group of invertible elements of a finitely generated monoid $M$. Denote by $g_M$ the minimal number of group generators of $\sP(M)$. We will use a second induction on $g$.

Take $y$ to be a generator of $\sP(M)$ (thus, $y\neq 1$) and let $x\in \Piv(M)$, such that $y\in M^x$. The group $\sP(M_1)$ is a quotient of $\sP(M)$, where $M_1=M/(y\sim 1)$. This is by Lemma part iii) of \ref{d131.77}. Hence, $M_1$ is either separated (if $g_M=1$) or $g_{M_1}<g_M$. In both cases, $i_1^*$ is a homotopy equivalence. This finishes the proof.

\subsubsection{Integrality}

Finally, we want to remove our assumption that $M$ is integral from Proposition \ref{reductiontogroup}. This is done in a similar fashion to the above argument, but is a bit simpler.

We may assume that there are elements $x,y\in M$, with $x\neq 0, y\neq 0$ and $xy=0$. Further, $x\neq y$ since otherwise $x$ would be nilpotent. Denote
$$M_x:=M/(x), M_y:=M/(y) \text{ and } M_{xy}:=M/(x,y).$$
We will essentially repeat the first half of the above argument. Consider the following commutative diagram
$$\xymatrix{ & & M_y\ar[rd] &\\
	M\ar[rru]\ar[rd] & & & M_{xy}\\
	& M_x.\ar[rru] & & }$$
We have
$$\kSpec(M)=\kSpec(M_x)\cup \kSpec(M_y), \quad \kSpec(M_{xy})=\kSpec(M_x)\cap \kSpec(M_y).$$

We know that $M_x,M_y$ and $M_{xy}$ have less than $\ell$ prime ideals by Section \ref{1277prime} and hence, we can use the induction hypothesis. Combining that with part i) of Lemma \ref{quot1} shows that we have homotopy equivalences $\kSpec(M_x)\simeq \kSpec(M_x^\times)$, $\kSpec(M_y)\simeq \kSpec(M_y^\times)$ and $\kSpec(M_{xy})\simeq \kSpec(M_{xy}^\times)$. Using the fact that the induced maps on the $K$-spectrum are once again cofibrations as we did in Section \ref{sep} yields the proof of Theorem \ref{reductiontogroupsnonsep}.

The next example shows that even if $M$ has no idempotents, the can appear when one considers the admissible decomposition:

\begin{Exm} Let $M=\gen{x,y,z}/(xy=x, yz^2=yz)$. It is obvious that $M$ has no idempotents. However, $M_0=M/x=\gen{y,z}/yz^2=yz$ and $M_1=\gen{x,z}/z^2=z$. We see that $M_1$ has $z$ as an idempotent.
\end{Exm}

\begin{Exm}\label{idem} Let $M=\gen{x,y}/x^2y=x$. It has the idempotent $(xy)^2=xy$, and clearly no other. We have $\Adm(M)=\{(0), (xy)\}$. This yields
$$\kSpec(M)\simeq\kSpec(M/xy\sim 1))\sqcup\kSpec(M/x)\simeq\kSpec(\Z^\circ)\sqcup\kSpec(\N^\circ),$$
where $\Z^\circ$ is the free grouplike binoid with one generator and $\N^\circ$ if the free binoid with one generator.
\end{Exm}

As a consequence, we can now prove the following result, which is closely related to \cite[Corollary 4.5.8]{S.Bot}, where the author proves this result for hypersurfaces, but for characteristic zero fields.

\begin{Th}\label{complexcomponents} Let $M\neq 0$ be a binoid and denote by $K_M$ the number of connected components of the binoid algebra $\C[M]$. We have
$$K_M=\sum_{\r\in\Adm}\mathsf{Tors}(M(\r)^\times),$$
where $\mathsf{Tors}(M(\r)^\times)$ denotes the torsion elements in the subgroup of invertible elements of $M(\r)$.
\end{Th}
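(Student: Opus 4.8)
The plan is to read $K_M$ as the number of connected components of the topological space $\kSpec(M)$ for $\k=\C$; this is legitimate because $\kSpec(M)=\C\textnormal{-}\Spec(\C[M])$ by Lemma \ref{131.77} i), and counting connected components is insensitive to nilpotents (one also has $\kSpec(M)=\kSpec(M_{\sf red})$ by Lemma \ref{131.77} ii)). The argument then rests on two ingredients: the homotopy decomposition of Theorem \ref{reductiontogroupsnonsep}, and a computation of $\pi_0$ of the $\C$-spectrum of a grouplike binoid.

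First I would apply Theorem \ref{reductiontogroupsnonsep} with $\k=\C$ to obtain a homotopy equivalence
$$\kSpec(M)\simeq \coprod_{\r\in\Adm(M)}\kSpec(M(\r)^{\x}).$$
A homotopy equivalence is a bijection on $\pi_0$, and $\pi_0$ carries disjoint unions to disjoint unions, so
$$K_M=\sum_{\r\in\Adm(M)}\big|\pi_0\kSpec(M(\r)^{\x})\big|.$$
It therefore suffices to show that $\big|\pi_0\kSpec(M(\r)^{\x})\big|=|\mathsf{Tors}(M(\r)^\times)|$ for each admissible $\r$.

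Now each $M(\r)^{\x}$ is grouplike by construction, so $M(\r)^{\x}=G^{\mathsf o}$ for the finitely generated abelian group $G=M(\r)^\times$ of units. I would identify $\kSpec(G^{\mathsf o})$ with the character group $\Hom_{\bf grp}(G,\C^*)$: a binoid homomorphism $\phi\colon G^{\mathsf o}\to\C$ satisfies $\phi(1)=1$, and for invertible $g$ the relation $\phi(g)\phi(g^{-1})=1$ forces $\phi(g)\in\C^*$, so $\phi$ restricts to a group homomorphism $G\to\C^*$; the converse is clear. This identification is a homeomorphism for the subspace topology of $\C^n$, exhibiting $\kSpec(G^{\mathsf o})$ as the $\C$-points of the diagonalisable group $\Spec\C[G]$.

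Finally, writing $G\cong\Z^r\times T$ with $T=\mathsf{Tors}(G)$ finite gives $\Hom(G,\C^*)\cong(\C^*)^r\times\Hom(T,\C^*)$ as topological groups. The factor $(\C^*)^r$ is connected, while $\Hom(T,\C^*)$ is a finite discrete space; since $\C^*$ contains all roots of unity, $\Hom(T,\C^*)\cong T$, a set of cardinality $|T|$. Hence $\big|\pi_0\kSpec(M(\r)^{\x})\big|=|T|=|\mathsf{Tors}(M(\r)^\times)|$, and summing over $\r$ proves the theorem. The only step needing care — the main, if mild, obstacle — is the topological assertion that the subspace topology from $\C^n$ genuinely makes $\Hom(G,\C^*)$ into the complex Lie group $(\C^*)^r\times T$ with identity component $(\C^*)^r$, so that counting connected components really returns $|T|$ and not some finer decomposition.
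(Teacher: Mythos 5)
Your proposal is correct and follows essentially the same route as the paper: apply Theorem \ref{reductiontogroupsnonsep}, split each grouplike binoid of units into its free part (giving a connected factor $(\C^*)^r$) and its torsion part (giving $|\mathsf{Tors}(M(\r)^\times)|$ discrete points), and sum over $\r\in\Adm(M)$. The only point the paper makes explicit that you elide is the identification of Zariski-connectedness of $\Spec\C[M]$ with connectedness of $\C\Spec(M)$ in the complex topology, which it dispatches in one sentence.
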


\begin{proof} For any ring $R$, the non-existence of idempotent elements is equivalent to the connectedness of $\Spec(R)$ in the Zariski topology. For a $\C$ algebra, this is equivalent to the connectedness of $\C \Spec(R)$ with its complex topology. By Theorem \ref{reductiontogroupsnonsep}, we have
$$\C \Spec(\C[M])\simeq\C \Spec(M)\simeq\coprod\limits_{\r\in \Adm(M)} \C\Spec((M(\r)^{\x}).$$
For every $\r\in \Adm(M)$, we have
$$\C\Spec((M(\r)^{\x})\simeq\C\Spec((\Z^\circ)^{n_\r}\wedge \mathsf{Tors}(M(\r))^{\x})\simeq\C\Spec(\Z^\circ)^{n_\r}\times\C \Spec( \mathsf{Tors}(M(\r)^{\x})).$$
Clearly $\C \Spec[\Z^\circ]$ is connected and so are all of its powers. On the other hand, as $\C$ is algebraically closed, the connected components of $\C\Spec(\mathsf{Tors}(M(\r)^{\x}))$ agree with the elements of $\mathsf{Tors}(M(\r))$.
\end{proof}

\begin{Co} Let $M\neq 0$ be a binoid with no non-trivial idempotent elements and with a torsionfree group of units. Let $\k$ denote a field of characteristic $0$. Then the binoid algebra $\k[M]$ has no non-trivial idempotent elements and $\Spec \k[M]$ is connected.
\end{Co}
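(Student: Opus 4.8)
The plan is to reduce the corollary to the complex case already settled by Theorem \ref{complexcomponents}, and then to descend to an arbitrary field of characteristic $0$ by a finiteness argument. Recall first the standard ring-theoretic fact, already invoked in the proof of Theorem \ref{complexcomponents}, that a ring $R$ has no non-trivial idempotents if and only if $\Spec(R)$ is connected. Consequently the two assertions of the corollary are equivalent, and it suffices to show that $\k[M]$ has no non-trivial idempotents.

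First I would settle the case $\k=\C$ by specialising the formula of Theorem \ref{complexcomponents}. Since $M$ has only the trivial idempotents, $\ide(M)=\{0,1\}$ has a unique prime ideal, so $\Adm(M)$ consists of the single element $\r=\{0\}$; moreover $M(\r)=M$, because both defining relations $\r M\sim 0$ and $\r^{\sf c}\sim 1$ are vacuous in this case. As the group of units $M^\times$ is torsion-free, $\mathsf{Tors}(M^\times)=\{1\}$ contributes exactly $1$ to the sum. Hence $K_M=1$, which is to say $\C[M]$ has no non-trivial idempotents and $\C\Spec(\C[M])$ is connected.

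Next I would pass from $\C$ to an arbitrary field $\k$ of characteristic $0$. Suppose for contradiction that $\k[M]$ contained a non-trivial idempotent $e$. Since $M$ is finitely generated, $e$ is a finite $\k$-linear combination of elements of $M$, involving only finitely many coefficients; let $\k_0\subseteq\k$ be the subfield these coefficients generate over $\mathbb{Q}$. Then $\k_0$ is a finitely generated extension of $\mathbb{Q}$, hence of finite transcendence degree, and therefore embeds into $\C$: map a transcendence basis to algebraically independent complex numbers (which exist, as $\C$ has infinite transcendence degree over $\mathbb{Q}$) and extend over the remaining algebraic part, using that $\C$ is algebraically closed. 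The resulting inclusion $\k_0[M]\hookrightarrow\C[M]$ is injective, so it carries the non-trivial idempotent $e$ to a non-trivial idempotent of $\C[M]$, contradicting the previous step. Therefore $\k[M]$ has no non-trivial idempotents, and $\Spec\k[M]$ is connected.

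The one step requiring genuine care is this descent: one must observe that any idempotent is already defined over a finitely generated subfield — immediate from finite generation of $M$ — and that the chosen embedding $\k_0\hookrightarrow\C$ keeps $e$ non-trivial, which is guaranteed by injectivity of the induced map on binoid algebras. The remainder is a direct reading of Theorem \ref{complexcomponents} under the two hypotheses on $M$.
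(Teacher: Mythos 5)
Your proposal is correct and follows essentially the same route as the paper: the paper's own (one-line) proof likewise embeds the relevant finitely generated subfield of $\k$ into $\C$ and invokes Theorem \ref{complexcomponents}. You have merely spelled out the details that the paper leaves implicit, namely that $\Adm(M)$ is a singleton with $M(\r)=M$ so that $K_M=|\mathsf{Tors}(M^{\x})|=1$, and that an idempotent lives over a finitely generated subfield which embeds into $\C$.
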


\begin{proof} 
We can put all relevant data of $K$ in a subfield of $\C$. Hence this is a special case of Theorem \ref{complexcomponents}.
\end{proof}

\section{Further simplifications}

We have reduced the study of the homotopy of $\kSpec(M)$ for finitely generated binoids, to the disjoint union of $\kSpec(G_i)$-s, where the $G_i$-s are grouplike binoids. Denote the free grouplike binoid with 1 generator by $\Z^\circ$ and the cyclic binoid with n+1 elements (the additional element being the absorbing element) by $C_n^\circ:=\Z^\circ/n\Z^\circ=\gen{x}/x^n=1$.

Let $G$ be a grouplike binoid. We define by $C_2^\circ\otimes G$ the grouplike binoid obtained by imposing $g^2=1$ for all $g\neq 0\in G$.

\begin{Pro}\label{caseofgroups} Let $G$ be a grouplike binoid. We have a homotopy equivalence
$$\RSpec(C_2^\circ\otimes G)\rightarrow \RSpec(G).$$
\end{Pro}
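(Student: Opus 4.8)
The plan is to realise the map of Proposition~\ref{caseofgroups} as the inclusion of a deformation retract, in direct analogy with the proof of Proposition~\ref{reductiontogroup}, the grading used there being replaced by the absolute value of a real character. First I would pass to group-theoretic language. A grouplike binoid $G$ with $0\neq 1$ is integral, so by Lemma~\ref{131.77}~iii) its set of nonzero elements $G^\bullet$ is a finitely generated abelian group and $\RSpec(G)=\Hom_{\bf Mon}(G^\bullet,\R)$. Since every $g\in G^\bullet$ is invertible, each $p\in\RSpec(G)$ satisfies $p(g)p(g^{-1})=p(1)=1$, whence $p(g)\neq 0$; writing $\R^{\times}:=\R\setminus\{0\}$ we therefore have, as topological spaces with the subspace topology from $\R^n$,
$$\RSpec(G)=\Hom_{\bf Grp}(G^\bullet,\R^{\times}).$$
Setting $\Gamma:=G^\bullet$ and $\bar\Gamma:=\Gamma/\Gamma^2$ (the group of nonzero elements of $C_2^\circ\otimes G$), the map in question becomes the precomposition map $\pi^{\ast}\colon\Hom_{\bf Grp}(\bar\Gamma,\R^{\times})\to\Hom_{\bf Grp}(\Gamma,\R^{\times})$ induced by the quotient $\pi\colon\Gamma\to\bar\Gamma$.

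Next I would identify the image of $\pi^{\ast}$ with the subspace of sign characters. Since $\bar\Gamma$ has exponent $2$, every homomorphism $\bar\Gamma\to\R^{\times}$ lands in the $2$-torsion $\{\pm1\}$, so $\RSpec(C_2^\circ\otimes G)=\Hom_{\bf Grp}(\bar\Gamma,\{\pm1\})$ is a finite discrete space. As $\{\pm1\}$ also has exponent $2$, any $f\colon\Gamma\to\{\pm1\}$ kills $\Gamma^2$ and hence factors uniquely through $\pi$; therefore $\pi^{\ast}$ is a homeomorphism onto the closed subspace
$$S:=\{\,p\in\Hom_{\bf Grp}(\Gamma,\R^{\times})\mid p(g)^2=1\ \text{for all}\ g\in\Gamma\,\}\subseteq\RSpec(G).$$

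Finally I would construct an explicit deformation retraction of $\RSpec(G)$ onto $S$. Each $p\in\Hom_{\bf Grp}(\Gamma,\R^{\times})$ factors as $p(g)=\varepsilon_p(g)\,|p(g)|$ with $\varepsilon_p(g)\in\{\pm1\}$ and $|p(g)|\in\R_{>0}$, and I set
$$F(p,t)(g)=\varepsilon_p(g)\,|p(g)|^{\,t},\qquad t\in[0,1].$$
Because both the sign and the absolute value are multiplicative on $\R^{\times}$, each $F(p,t)$ is again a homomorphism $\Gamma\to\R^{\times}$, hence a point of $\RSpec(G)$; moreover $\varepsilon_p(g)$ is locally constant in $p$ (the coordinates $p(e_i)$ never vanish) while $|p(g)|^{t}$ is jointly continuous, so $F$ is continuous. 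One checks $F(-,1)=\id$, that $F(-,0)$ takes values in $S$, and that $F(p,t)=p$ for all $p\in S$ and all $t$, so $F$ is a deformation retraction onto $S$. Composing with the homeomorphism $\pi^{\ast}\colon\RSpec(C_2^\circ\otimes G)\xrightarrow{\ \simeq\ }S$ then shows that the map of the Proposition is a homotopy equivalence.

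The argument is essentially formal once the two dictionary steps are in place, so I expect the only friction to be bookkeeping: checking that $\RSpec(G)=\Hom_{\bf Grp}(\Gamma,\R^{\times})$ is genuinely a homeomorphism (not merely a set bijection) and that $F$ is well defined and continuous as a map into $\RSpec(G)$. The only conceptual point worth flagging is the precise analogue of the role played by the grading in Proposition~\ref{reductiontogroup}: here the contracting direction is the positive part $|p(\cdot)|$, which is canonically available on $\R^{\times}$ with no need for a grading, while the discrete sign part $\varepsilon_p$ is exactly what survives in $C_2^\circ\otimes G$.
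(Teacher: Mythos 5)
Your proof is correct, but it takes a genuinely different route from the paper's. The paper invokes the structure theorem for finitely generated abelian groups to write $G$ as a smash product of $(\Z^\circ)^n$ with cyclic binoids $C_{e_i}^\circ$ and $C_{o_j}^\circ$, uses that $\kSpec$ turns such products into products of spaces, and computes each factor by hand: $\RSpec(\Z^\circ)=\R^{\times}$ retracts onto $\{\pm1\}$, while $\RSpec(C_n^\circ)$ is the set of real $n$-th roots of unity, namely $\{\pm 1\}$ or $\{1\}$ according to the parity of $n$; the conclusion is that $\RSpec(G)$ is, up to homotopy, the explicit finite set of sign tuples. You instead work intrinsically with $\Gamma=G^\bullet$ and contract along the polar decomposition $\R^{\times}=\{\pm1\}\times\R_{>0}$ via $F(p,t)(g)=\varepsilon_p(g)\,|p(g)|^{t}$, which needs no decomposition of $\Gamma$ and has the pleasant feature of exhibiting the map of the Proposition explicitly as the inclusion of a deformation retract; your $F(-,0)$ is precisely the homotopy inverse that the paper remarks does not come from a binoid homomorphism. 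What the paper's computation buys in exchange is the concrete identification of $\pi_0(\RSpec(G))$ with tuples of signs indexed by a minimal generating set, which is what is used downstream in Corollary \ref{pi0group} (the count $n_\r$) and in the analysis of induced maps in Subsection \ref{morps}; your argument recovers the same data as $\Hom(\Gamma/\Gamma^2,\{\pm1\})$ with one extra line. The only degenerate case worth a word is $0=1$ in $G$, where both spectra are empty and the statement is vacuous; you implicitly exclude it, which is harmless.
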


\begin{proof} Let 
$$G=(\Z^\circ)^n\wedge C_{e_1}^\circ\wedge\cdots\wedge C_{e_l}^\circ\wedge C_{o_1}^\circ\wedge\cdots\wedge C_{o_m}^\circ$$
be a decomposition of $G$, where the $e_i$-s are even numbers and $o_j$-s are odd numbers. First, observe that since $\kSpec(M)$ is defined to be $\Hom_\mathsf{bn}(M,\k)$, it respects products. As such, we have
\begin{equation*}
\resizebox{0,975\hsize}{!}{$\RSpec(G)\simeq\RSpec(\Z^\circ)^n\times \RSpec(C_{e_1}^\circ)\times\cdots\times \RSpec(C_{e_l}^\circ)\times \RSpec(C_{o_1}^\circ)\times\cdots\times \RSpec(C_{o_m}^\circ)$.}
\end{equation*}

We immediately see that $\RSpec(\Z^\circ)$ is homotopy equivalent to $\RSpec(C_2^\circ)$, which takes care of the free part. The cyclic parts are even easier, since $\RSpec(C_n^\circ)$ is the set of solutions to $x^n=1$ over the real numbers. Hence,
\[ \RSpec(C_n^\circ)=\begin{cases}
\RSpec(C_2^\circ) \ \textnormal{ if } \ n=\textnormal{even}, \\
\RSpec(C_1^\circ) \ \textnormal{ if } \ n=\textnormal{odd}.
\end{cases}
\]
As such, we can assume that, up to homotopy, $\RSpec(G)$ is the collection 
$$\{(\underbrace{\pm 1,\ldots,\pm 1}_{\times n},\underbrace{\pm 1,\ldots,\pm 1}_{\times l},\underbrace{1,\ldots,1}_{\times m})\},$$
implying our result.
\end{proof}

We point out that the homotopy inverse does not come from a binoid homomorphism.

\begin{Co}\label{pi0group} Let $M$ be a binoid. There is a homotopy equivalence
$$\RSpec(M)\simeq \coprod\limits_{\r\in \Adm(M)}\RSpec((C_2^\circ)^{n_\r})\simeq \coprod\limits_{\r\in \Adm(M)}(\RSpec(C_2^\circ))^{n_\r},$$
where $n_\r$ is the number of minimal generators of $M(\r)^{\x}\otimes C_2^\circ= M(\r)^{\x}/m^2\sim 1$. In particular,
$$\Pi_1(\RSpec(M))\simeq \coprod_{\r\in\Adm(M)}\cD^{n_\r},$$
where $\cD^{n_\r}$ is the discrete groupoid with $2^{n_\r}$ objects. The coproduct is taken in the 2-category of groupoids, and is equivalent to their 2-coproduct.
\end{Co}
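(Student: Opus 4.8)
The plan is to assemble Corollary \ref{pi0group} from the results already in hand, treating the three displayed equivalences and the final groupoid statement as separate but linked assertions. First I would invoke Theorem \ref{reductiontogroupsnonsep} to reduce $\RSpec(M)$ to $\coprod_{\r\in\Adm(M)}\RSpec(M(\r)^{\x})$. Each $M(\r)^{\x}$ is a grouplike binoid, so Proposition \ref{caseofgroups} applies and gives a homotopy equivalence $\RSpec(M(\r)^{\x})\simeq\RSpec(C_2^\circ\otimes M(\r)^{\x})$. The binoid $C_2^\circ\otimes M(\r)^{\x}$ is, by definition, $M(\r)^{\x}/(m^2\sim 1)$, which is a finite elementary abelian $2$-group with an absorbing element adjoined; if $n_\r$ is its minimal number of generators then $C_2^\circ\otimes M(\r)^{\x}\simeq(C_2^\circ)^{n_\r}$. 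This identifies the first displayed equivalence.

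Next I would pass from $\RSpec((C_2^\circ)^{n_\r})$ to $(\RSpec(C_2^\circ))^{n_\r}$. As already noted in the proof of Proposition \ref{caseofgroups}, the functor $\kSpec(-)=\Hom_{\bf bn}(-,\k)$ sends smash products (coproducts in ${\bf bn}$) to products, so $\RSpec((C_2^\circ)^{n_\r})\cong(\RSpec(C_2^\circ))^{n_\r}$ even as topological spaces. This gives the second displayed equivalence. Since $\RSpec(C_2^\circ)$ is the real solution set of $x^2=1$, namely the two points $\{\pm 1\}$, each factor is a discrete two-point space, so $(\RSpec(C_2^\circ))^{n_\r}$ is a discrete space with $2^{n_\r}$ points.

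For the fundamental groupoid statement I would apply $\Pi_1$ to the homotopy equivalence just established. The fundamental groupoid is a homotopy invariant, so $\Pi_1(\RSpec(M))$ is equivalent to $\Pi_1$ of the disjoint union, which is the disjoint union of the $\Pi_1$ of the summands. Each summand is a discrete space with $2^{n_\r}$ points, whose fundamental groupoid is the discrete groupoid $\cD^{n_\r}$ on $2^{n_\r}$ objects (every point is its own connected component, every automorphism group trivial). I would then remark that for groupoids the ordinary coproduct and the $2$-coproduct agree, so the coproduct on the right may be interpreted in the $2$-category $\Grpd$; this is a formal fact about disjoint unions and requires only that no nontrivial morphisms connect distinct summands.

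The main obstacle, such as it is, lies in the bookkeeping of the first paragraph rather than in any topological difficulty: one must check that $C_2^\circ\otimes M(\r)^{\x}\simeq(C_2^\circ)^{n_\r}$, i.e.\ that killing squares in a finitely generated abelian group with absorbing element yields a free $\F_2$-module of rank equal to the minimal number of generators of the quotient, and that $n_\r$ as defined via $M(\r)^{\x}/(m^2\sim 1)$ is well-defined and agrees across the two descriptions. This is elementary structure theory of finitely generated abelian groups, so no genuine difficulty arises; the substance of the corollary is entirely carried by Theorem \ref{reductiontogroupsnonsep} and Proposition \ref{caseofgroups}, and the remaining steps are the homotopy invariance of $\Pi_1$ together with the identification of the relevant spaces as discrete.
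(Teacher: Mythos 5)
Your proposal is correct and follows exactly the route the paper takes: the paper's own proof simply cites Theorem \ref{reductiontogroupsnonsep} and Proposition \ref{caseofgroups} as giving the result directly, deferring the coproduct/2-coproduct identification to Corollary \ref{coprod=2-coprod}, and your write-up just makes those same steps explicit. The bookkeeping you flag (identifying $C_2^\circ\otimes M(\r)^{\x}$ with $(C_2^\circ)^{n_\r}$ and passing smash products to products under $\RSpec$) is already implicit in the proof of Proposition \ref{caseofgroups}, so there is no gap.
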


\begin{proof} This is a direct result of Theorem \ref{reductiontogroupsnonsep} and Proposition \ref{caseofgroups}. The last assertion (the equivalence of the coproduct and the 2-coproduct) will be shown in Corollary \ref{coprod=2-coprod}.
\end{proof}

It follows directly from the corollary that
$$\pi_0(\RSpec(M))\simeq \bigcupplus_{\r\in \Adm(M)}C_2^{n_\r},$$
where $\bigcupplus$ denotes the disjoint union. Any point $\px\in\pi_0(\RSpec(M))$ can be seen as
$$\px=((-1)^{\delta_1},(-1)^{\delta_2}\ldots,(-1)^{\delta_{n_r}}), \ \ \delta_i\in\{0,1\},$$
for some $r\in\Adm(M)$.

\begin{Exm} Let us consider the binoid given by $M=\gen{x,y,y^{-1},z}/x^2y=z^2$. It is easily seen that $M$ has no non-trivial idempotents. As such,
$$\RSpec(M)\simeq\RSpec(C_2^\circ).$$
\end{Exm}

\subsection{Morphisms}\label{morps}

Let $\alpha:M\rightarrow N$ be a binoid homomorphism. This induces a map
$$\alpha_*:\pi_0(\RSpec(N))\rightarrow \pi_0(\RSpec(M)).$$
Due to the above corollary, we may rewrite it as
$$\alpha_*:\bigcupplus_{\r\in \Adm(N)}(C_2^\circ)^{n_\r}\rightarrow \bigcupplus_{\t\in \Adm(M)}(C_2^\circ)^{m_\t}.$$
Let $\px$ be a point of $\pi_0(\RSpec(N))\simeq \bigcupplus_{\r\in \Adm(N)}(C_2^\circ)^{n_\r}$. Since its an element of a disjoint union of sets, it is inside a single one. Hence, we may simply write $\px\in(C_2^\circ)^n$. Likewise, for it's image, we may write $\alpha_*(\px)\in(C_2^\circ)^m$.

Let $G=(C_2^\circ)^n$ and $H=(C_2^\circ)^m$, with $\{x_1,\ldots,x_n\}$ and $\{y_1,\ldots,y_m\}$ as their respective minimal generators. We have $\alpha_*(x_i)=\prod_{j=1}^my_j^{\delta_{ij}}, \delta_{ij}=0,1$. Denote by $\Supp_\alpha(x_i)$ the subset of $\{1,\ldots,m\}$, for which $\delta_{ij}=1$. We rewrite $\alpha_*(x_i)=\prod\limits_{j\in\Supp_\alpha(x_i)}y_j$.
We have
\begin{eqnarray*} \alpha_*(\px)&=&(\prod_{j\in\Supp_\alpha(x_1)}(-1)^{\delta_j},\prod_{j\in\Supp_\alpha(x_2)}(-1)^{\delta_j},\ldots,\prod_{j\in\Supp_\alpha(x_n)}(-1)^{\delta_j})\\
		  &=&((-1)^{\sum\limits_{j\in\Supp_\alpha(x_1)}\delta_j},(-1)^{\sum\limits_{j\in\Supp_\alpha(x_2)}\delta_j},\ldots,(-1)^{\sum\limits_{j\in\Supp_\alpha(x_n)}\delta_j}).
\end{eqnarray*}

For every $1\leq i\leq n$, denote by $k_i$ the number of elements in $\Supp_\alpha(x_i)$. We can reduce the above formula to the following:

\begin{eqnarray}\label{mapingformula} \alpha(\px)=((-1)^{k_1},(-1)^{k_2},\ldots,(-1)^{k_n}).
\end{eqnarray}

\begin{Co} Let $\alpha:M\rightarrow N$ a homomorphism of binoids with no non-trivial idempotents. The induced map
$$\alpha_*:\pi_0(\RSpec(N))\rightarrow \pi_0(\RSpec(M))$$
is a group homomorphism.
\end{Co}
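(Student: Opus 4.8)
The plan is to use the hypothesis of trivial idempotents to reduce the statement to a single genuine group on each side, and then to read off multiplicativity from the coordinatewise description of $\alpha_*$ obtained in Subsection \ref{morps}. First I would record what the hypothesis buys us: if $M$ has no non-trivial idempotents, then $\ide(M)=\{0,1\}$, so $\Adm(M)=\mspec(\ide(M))$ is a singleton and Corollary \ref{pi0group} collapses the disjoint union, giving $\pi_0(\RSpec(M))\simeq (C_2^\circ)^{m}$ --- an honest abelian group under componentwise multiplication, rather than merely a disjoint union of groups. The same holds for $N$, yielding $\pi_0(\RSpec(N))\simeq (C_2^\circ)^{n}$. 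This is precisely what makes ``group homomorphism'' a meaningful assertion, and it also guarantees that $\alpha_*$ cannot leave the unique component on either side.

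With both $\pi_0$'s identified with single groups, I would appeal directly to the coordinatewise formula for $\alpha_*$ computed in Subsection \ref{morps}. Writing a point as $\px=((-1)^{\delta_1},\ldots,(-1)^{\delta_n})$, that computation shows each coordinate of $\alpha_*(\px)$ has the form $(-1)^{\sum_{j\in S}\delta_j}$ for a fixed subset $S$ of the index set depending only on $\alpha$ (one such $S=\Supp_\alpha(x_i)$ for each output coordinate); that is, each coordinate of the image is a character in the input exponents. Multiplicativity is then immediate from additivity of the exponent: for $\py=((-1)^{\epsilon_1},\ldots,(-1)^{\epsilon_n})$ we have $\px\py=((-1)^{\delta_j+\epsilon_j})_j$, and
$$\sum_{j\in S}(\delta_j+\epsilon_j)=\sum_{j\in S}\delta_j+\sum_{j\in S}\epsilon_j ,$$
whence $\alpha_*(\px\py)=\alpha_*(\px)\,\alpha_*(\py)$ coordinate by coordinate. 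In other words $\alpha_*$ is the $\Z/2$-linear map whose matrix is the incidence matrix of the sets $\Supp_\alpha(x_i)$, and a $\Z/2$-linear map is in particular a homomorphism of the underlying abelian groups.

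Conceptually I would phrase this as the statement that $\alpha_*$ is the $\Z/2$-dual of the homomorphism induced by $\alpha$ on units: $\alpha$ restricts to $M^{\x}\to N^{\x}$ and hence induces $\bar\alpha\colon C_2^\circ\otimes M^{\x}\to C_2^\circ\otimes N^{\x}$, a homomorphism of grouplike binoids, and the naturality of the homotopy equivalences of Propositions \ref{reductiontogroup} and \ref{caseofgroups} (each sitting inside a commuting square of binoid homomorphisms) identifies $\alpha_*$ with the precomposition map $\Hom(\bar\alpha,\{\pm1\})$, which is automatically linear. The one delicate point --- and the only place the argument could go wrong --- is the compatibility of the transported group structure with these equivalences, since the homotopy inverse in Proposition \ref{caseofgroups} is not induced by any binoid homomorphism, so one cannot assume a priori that $\alpha_*$ respects the group laws merely because it is continuous. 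This is exactly why I would rest the proof on the explicit coordinate formula of Subsection \ref{morps}, which computes $\alpha_*$ outright and thereby sidesteps the non-algebraic homotopy inverse entirely, leaving only the trivial additivity of exponents to verify.
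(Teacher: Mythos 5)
Your proposal is correct and follows essentially the same route as the paper: the corollary is stated there as an immediate consequence of the coordinate formula derived just before it in Subsection \ref{morps}, namely that each coordinate of $\alpha_*(\px)$ is $(-1)^{\sum_{j\in\Supp_\alpha(x_i)}\delta_j}$, which is additive in the exponents. Your additional remarks (the collapse of $\Adm(M)$ to a singleton, and the caveat that the homotopy inverse of Proposition \ref{caseofgroups} is not a binoid homomorphism) correctly identify why the explicit formula, rather than abstract functoriality, is the right foundation.
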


\begin{Rem} This immediately implies that for binoids without non-trivial idempotents, the fibre of every $\py\in\pi_0(\RSpec(M))$ has the same number of elements. As such, in order to calculate the number of elements in the image of $\alpha_*$, it suffices to calculate the number of elements mapping to the unit $(1,1,\ldots,1)$. That is to say, the collections of $\{m_i\}$ (as given in Formula (\ref{mapingformula})), for which $m_i$ is even for all $1\leq i\leq n$.
\end{Rem}

\section{Schemes} \label{schemes}

Let us recall the definition of a binoid scheme. Let $M$ be a commutative binoid and $\mspec(M)$ the set of prime ideals of $M$. Take an element $f\in M$. We define the subset $D(f):=\{\p\in\mspec(M) | f\not \in \p\}$. Since $D(f)\cap D(g)= D(fg)$, the collection of all $D(f), f\in M$ is an open base of topology. This is usually called the \emph{Zariski topology}. The association
$$D(f)\mapsto M_f$$
defines a sheaf of binoids, called the \emph{structure sheaf} of $M$. The topological space $\mspec(M)$, together with its structure sheaf is called an \emph{affine binoid scheme}, and is written as $(\mspec(M),\mathcal{O}_{\mspec(M)})$, or simply $\mspec(M)$ when there is no ambiguity. A binoid scheme $(X,\mathcal{O}_X)$ is a sheaf of binoids that locally looks like an affine binoid scheme. 

\subsection{Realisation of binoid schemes}

Clearly, if $M_f\simeq N_g$ are isomorphic binoids, we have $\kSpec(M_f)\simeq \kSpec(N_g)$. Hence, we can talk about $\k X$, for a binoid scheme $X$. It follows immediately that an affine cover $\U=\{U_i\}=\{\mspec(M_i)\}$ gives rise to an open cover of $\k X$.

We say that a poset $(P,\leq)$ is \emph{locally a lattice}, if for every $p\in P$, $L(P,p):=\{q\in P|q\leq p\}$ is a lattice. We know that the underlying topological space of $T(X)$ of a binoid scheme $X$ is locally a lattice poset \cite{chww}, \cite{spec}. This is called the \emph{underlying poset} of $X$.

In the affine case, the elements of this poset (which is a (join) lattice) correspond to prime ideals and $p\leq q$ if and only if $p\subseteq q$. We note that in the finitely generated case, we can replace prime ideals with elements from $M^{\sf sl}$, see Subsection \ref{msl}.

A binoid scheme $X$ is said to be \emph{quasi-separated} if the intersection of affine subschemes is again affine. One can see that this simply means that the connected components of the underlying topological poset of $X$ are globally meet-semilattices with a maximal element \cite[Corollary 3.7]{chww}. We remark that there is a small clash of terminology. The authors in the above paper work with monoids with an absorbing element. However, they do not require for a homomorphism to preserve it. Furthermore, their use of the word separated is not to be confused with our notion of a separated binoid. It also differs from our notion of quasi-separatedness, though their notion of a separated binoid scheme implies that it is quasi-separated.

Note that the underlying meet semi-lattice of $X$ need not have a unit in general, even if it is connected. Indeed, it will have a unit if and only if it is affine.

A sheaf of binoids over $X$ becomes simply a contravariant functor
$$\mathcal{S}:T(X)\rightarrow \mathsf{binoids},$$
where we look at the poset $T(X)$ as a category in the natural way, see \cite[Proposition 2.11]{chww}.

Henceforth, unless otherwise stated, a binoid scheme $X$ is assumed to be of finite type and quasi-separated.

Let us fix such an affine covering $\{M_i\},i\in\{1,\ldots, k\}$, of $X$. We aim to give a method of calculating all the homology groups, as well as the fundamental groupoid $\Pi_1(\R X)$ of the 'real' geometric object $\R X$ associated to $X$.

\subsection{Homology of $\R X$}

\subsubsection{Preliminary discussion}

Recall that a \emph{simplicial complex} is a set $K$, together with a collection of finite non-empty subsets of $K$ called \emph{faces} or \emph{simplices}, such that any singleton is a face (or simplex) and every non-empty subset of a face is a face. A face $\sigma$ is of dimension $n$, or simply $n$-face, if $|\sigma|=n+1$. The collection of $n$-faces is denoted by $K_n$. We assume that there is given a total order on $K$. For a  $k$-face $\sigma=\{v_0<v_1<\cdots <v_k\}$, $v_i\in K$ and any integer $i\in \{0,1,\cdots ,k\}$, one denotes by $\partial_i(\sigma)$ the $(k-1)$-dimensional face
$$\{v_0<\cdots<v_{i-1}<v_{i+1}<\cdots<v_k\}.$$

The natural ordering of the power set of $K$ induces an ordering on the set of all faces. Hence, one can consider contravariant functors defined on the set of all faces. Any such functor $A$ assigns an abelian group $A(\sigma)$ to a face $\sigma$. Moreover, if $\tau\subseteq \sigma$, there is a well-defined map $\alpha^{\sigma,\tau}:A(\sigma)\to A(\tau)$, such that $\alpha^{\sigma,\sigma}=\id_{A(\sigma)}$, and for  $\rho\subseteq \tau\subseteq \sigma$, one has $\alpha^{\sigma,\rho}=\alpha^{\tau,\rho}\circ\alpha^{\sigma,\tau}$. 

One can define a chain complex $C_*(K,A)$ as follows: The group of $n$-chains is  given by
$$C_n(K,A)=\bigoplus_{\sigma \in K_n}A(\sigma).$$
The inclusion map $A(\sigma)\to C_n(K,A)$ is denoted by $j^\sigma$. The  boundary map $$d_n:C_{n}(K,A)\to C_{n-1}(K,A)$$ 
is given by
$$d_n( j^{\sigma}(a)) =\sum_{i=0}^n(-1)^i j^{\partial_i \sigma}(\alpha^{\sigma,\partial_i \sigma}(a)).$$ 
Here, $\sigma\in K_n$ and $a\in A(\sigma)$. The homology of the chain complex $C_*(K,A)$ is denoted by $H_*(K,A)$. If $A$ is the constant functor with values in $\mathbb{Z}$, these groups coincides with the classical groups $H_*(K,\mathbb{Z})$. 

These groups appear in the classical spectral sequence of the homology of a covering (which is a homological version of \cite[Section II. 2.5]{R. Godement}). Let $S$ be a topological space and $\mathcal{U}=\{U_i\}_{i\in I}$ an open covering of $S$. We assume that $I$ is a totally ordered set. For $\sigma=(i_0<i_1<\cdots <i_k)$ , we set
$$U_\sigma=U_{i_0\cdots i_k}=U_{i_0}\cap \cdots \cap U_{i_k}.$$
Recall that the  nerve $\mathcal{N}$ of $\mathcal{U}=\{U_i\}_{i\in I}$ is  the simplicial complex,  whose vertices are elements of $I$. Moreover, $\sigma$ is a $k$-simplex of $\mathcal N$ if $U_{\sigma}\not= \emptyset$. We denote by ${\mathcal N}_k$ the set of all $k$-simplexes of $\mathcal N$.

If $\tau\subseteq \sigma$, then $U_\sigma\subseteq U_\tau$. Thus, by the functoriality of the singular homology with values in a group $G$ of the topological space, one obtains an induced homomorphism $H_q(U_\sigma)\to H_q(U_\tau)$, $q\geq 0$. This allows us to consider the homology groups $H_*(\mathcal{N},H_q)$. These are homology groups of the chain complex with values in $\mathbb{Z}$, whose $p$-th component is given by
$$\bigoplus_{(i_0<i_1<\cdots <i_p)\in {\mathcal N}_p} H_q(U_{i_0\cdots i_p}).$$
It is well-known that there exists a first quadrant homological spectral sequence for which
$$E^2_{pq}=H_p(\mathcal{N},H_q)\Longrightarrow H_{p+q}(S).$$

\subsubsection{Calculating the homology of $\R X$}\label{homology}

Let $X$ be a quasi-separated binoid scheme. Consider its open and affine covering $\{\mspec(M)_i\}_{i\in\{1,\ldots ,k\}}$. For the corresponding topological space $\R X$, we have an open cover given by 
$U_i=\RSpec(M_i)$. It follows from Corollary 3.1.2 and the well known identity $H_*(X\sqcup Y)\simeq H_*(X)\oplus H_*(Y)$, that $H_q(\RSpec(M_i))=0$ for all $q\geq 1$. Here, $H_*(-)$ is used to denote $H_*(-,\Z)$. As such, $E^2_{pq}=0$ for $q\geq 1$. The spectral sequence degenerates to an isomorphism
$$H_*(\R X)=H_*(\mathcal{N}, H_0).$$
In other words, these groups are isomorphic to the homology of the following cochain complex
$$\bigoplus_{i}H_0(U_i)\longleftarrow \cdots \longleftarrow \bigoplus_{i_0<\cdots< i_k}H_0(U_{i_0\cdots i_k})\longleftarrow\cdots .$$ 
Here, the sum is taken over all $i_0<\cdots< i_k$ for which $U_{i_0\cdots i_k}\not =\emptyset$.

\subsection{The Fundamental Groupoid $\Pi_1(\R X)$:}

We will give a brief description of how to calculate the fundamental groupoid of $\R X$. As one might expect, this will require several results of 2-category theory. We will, however, try to keep things as simple as possible, and will state many results without proofs, or even strict definitions. The interested reader will find the necessary results in \cite{p3}.

\subsubsection{Prerequisite results of 2-category theory:}

Recall that a \emph{2-category} $\fC$ is a category enriched in categories. In other words, it has objects, often called \emph{0-cells}, and for every two objects $\mathcal{A},\mathcal{B}$, their morphisms $\Hom_\fC(\mathcal{A},\mathcal{B})$ forms a category. The objects of this category are usually called \emph{1-cells} or \emph{morphisms} and the morphisms of $\Hom_\fC(\mathcal{A},\mathcal{B})$ are called \emph{2-cells} or \emph{2-morphisms}. There are a few other technical conditions, which we will omit.

The prime example of a 2-category is the 2-category of small categories. Here, objects (0-cells) are small categories, morphisms (1-cells) are functors and 2-morphisms (2-cells) are natural transformations.

A \emph{2-functor} (also called pseudo-functors) $\F:\fC\rightarrow \mathfrak{D}$ takes objects to objects, morphisms to morphisms and 2-morphisms to 2-morphisms. Note, however,  that compositions need to go to compositions. Instead, for any $\mathcal{A}\xrightarrow{\mathcal{F}}\mathcal{B}\xrightarrow{\mathcal{G}}\mathcal{C}$, there exists a natural isomorphism $\alpha_{\mathcal{F},\mathcal{G}}:\F(\mathcal{F})\circ\F(\mathcal{G})\Rightarrow\F(\mathcal{F}\circ \mathcal{G})$, which satisfies a certain compatibility condition for any $\mathcal{A}\xrightarrow{\mathcal{F}}\mathcal{B}\xrightarrow{\mathcal{G}}\mathcal{C}\xrightarrow{\mathcal{H}}\mathcal{D}$.

We can also talk of morphisms between 2-functors, but we will not discuss them in this paper. If the natural isomorphism $\alpha_{\mathcal{F},\mathcal{G}}$ is the identity, the 2-functor is said to be \emph{strict}.

Any category $\mathcal{C}$ can be seen as a 2-category, where the 2-morphisms are just the identities. Similar to how a set can be viewed as a category. Likewise, a functor $\mathcal{F}:\mathcal{C}\rightarrow\mathcal{D}$ can be viewed as a 2-functor. Moreover, such 2-functors will always be strict. 

For any 2-functor $\F:\fC\rightarrow\mathfrak{D}$, we can consider its 2-limit and 2-colimit. These are objects in $\mathfrak{D}$ and defined much like classical limits and colimits, except that the diagrams only need to \emph{2-commute}. That is to say, commute up to a natural isomorphism. This natural isomorphism itself needs to satisfy a certain 2-commutative diagram. A 2-(co)limit is unique up to an equivalence.

For strict 2-functors, we can consider both 2-limits and 2-colimits, as well as limits and colimits. These are usually not equivalent. A (co)limit is unique up to an isomorphism. Though 2-(co)limits are significantly harder to work with than (co)limits, one calculatory advantage that they hold is that in a 2-(co)limit, we may replace the participating objects (and morphisms) with equivalent ones. In a (co)limit, we may only replace them with isomorphic ones.

Let $\mathcal{F}:\cC\rightarrow\cD$ be a functor. It is said to be an \emph{equivalence}, if there exists a functor $\cG:\cD\rightarrow\cC$, such that $\cG\circ \cF$ and $\cF\circ\cG$ are isomorphic to the identities. The functor $\cF$ is said to be an \emph{isomorphism} if $\cG\circ\cF$ and $\cF\circ\cG$ are equal to the identities.

It is well known that a functor $\cF:\cC\rightarrow\cD$ is an equivalence if it is
\begin{itemize}
	\item \emph{full and faithful}, that is for all $C,C'\in\cC$, we have a bijection
	$$\Hom_\cC(C,C')\simeq\Hom_\cD(\cF(C),\cF(C'))$$
	and
	\item \emph{essentially surjective}, that is for all $D\in\cD$, there exist $D'\in\cD$ and $C\in \cC$, such that $\cF(C)=D'$, with $D'\simeq D$ being isomorphic.	
\end{itemize}
Likewise, $\cF$ is an isomorphism if it is full and faithful and bijective on objects. By the latter we mean that for all $D\in\cD$, there exists a unique object $C\in\cC$ with $\cF(C)=D$.

A functor $\cF:\cC\rightarrow \cD$ is said to be injective on objects if $\cF(A)=\cF(B)$ implies $A=B$. Note that, we do mean equality, not isomorphism. This condition will play an essential role in reducing the 2-colimit to the far simpler colimit.

Let $\cF:\cG\rightarrow\mathcal{H}$ be a functor. Assume there exist $x\neq x'\in\cG$, such that $\cF(x)=\cF(x')=y$. We define $\mathcal{H}'$ to be the category generated by adding a new object $y'$ to $\mathcal{H}$ and an isomorphism $\alpha:y\rightarrow y'$. Let $z,z'$ be objects of $\mathcal{H}'$. We have
$$\Hom_{\mathcal{H}'}(z,z')=
\begin{cases} \Hom_{\mathcal{H}}(z,z'), & z\not =y'\not =z'\\
			  \Hom_{\mathcal{H}}(y,z')\alpha^{-1}, & z =y'\not =z'\\
			  \alpha \Hom_{\mathcal{H}}(z,y), & z \not=y' =z'\\
			  \alpha \Hom_{\mathcal{H}}(y,y)\alpha^{-1}, & z =y'=z'.
\end{cases}$$
We also define a new functor $\cF':\cG\rightarrow \mathcal{H}'$ as follows. On objects, we set
$$\cF'(u)=
\begin{cases} \cF(u), & u\not =x'\\
			  y,& u=x'.
\end{cases}$$
Here, $u$ is an object of $\mathcal{H}$. On morphisms, $\cF'$ is defined by
$$\cF'(u\xto{\beta}v)=
\begin{cases} \cF(\beta), & u\not =x'\not =v\\
			  \cF(\beta)\alpha^{-1},& u=x'\not = v'\\
			  \alpha \cF(\beta),& u\not =x'= v'\\
			  \alpha \cF(\beta)\alpha^{-1},& u=x' = v'.
\end{cases}$$
The two functors $\cF$ and $\cF'$ are isomorphic via $\theta:\cF\to cF'$, where
$$\theta(u)=
\begin{cases} Id_{\cF(u)}, & u\not =x'\\
			  \alpha,& u=x'.
\end{cases}$$
We call this construction \emph{stretching}\label{stretching}.

\subsubsection{Groupoids}

A \emph{groupoid} is a category, where every morphism is an isomorphism. There is a strong connection between groups and groupoids. Let $G$ be a group. We can create a groupoid $\underline{G}$, which has only a single object $x\in\underline{G}$, with $\mathsf{Aut}(x)=G$. A groupoid which is equivalent to a groupoid with only a single object is said to be \emph{connected}.

Like with any category, if we take a groupoid $\cG$ and take its skeleton (that is we choose representatives in isomorphic classes of objects), we will get an equivalent (but not isomorphic) groupoid $\cG'$. In this way, we can think of a groupoid as a \emph{disjoint collection of groups}. More precisely, every groupoid $\cG$ is equivalent to the 2-coproduct $\coprod_i \underline{G_i}$ of groupoids coming from groups.

Reversely, we can also add isomorphic objects to our groupoid without fundamentally changing it. This is useful in making a functor injective on objects. 

A groupoid $\cD$ is said to be \emph{discrete}, if for every object $x\in\cD$, we have $\mathsf{Aut}(x)=\{1\}$, the trivial group with one object. These groupoids can be thought of as coming from sets. These types of groupoids will play a vital part in calculating the fundamental groupoid (defined below) $\Pi_1(\R X)$. 

We will need the classical construction of colimits of groupoids, which can be found in \cite{gz} (see also \cite{p7}). According to \cite[pp.10-11]{gz}, the colimit of a diagram of groupoids can be computed as the colimit in the category of small categories. The description of the later can be found in \cite[pp. 4-5]{gz}.

We are interested in the fundamental groupoid up to an equivalence of categories. As such, there is one more simplification we can make. In the colimit, before we span a category with it, but after we have taken the colimit of the objects and of the morphisms, we equate two objects if there exists at least one connecting isomorphism between them. We then have to set exactly one connecting isomorphism to the identity. If there are more than one connecting isomorphisms, it does not matter which one we choose. The others will then become automorphisms.

\subsubsection{The fundamental groupoid}

Our interest in groupoids comes from the \emph{fundamental groupoid} $\Pi_1(X)$ of a topological space $X$. It is defined to be a groupoid whose objects are the points of $X$. For any two objects $x,y\in\Pi_1(X)$, we define $\Hom_{\Pi_1}(x,y)$ to be the homotopy classes of paths from the point $x\in X$ to the point $y\in X$. It is clear that this defines a groupoid, since if there exists a path $\gamma:x\rightarrow y$, there exists a path $\gamma^{-1}:y\rightarrow x$. We immediately see that $\mathsf{Aut}(x)\simeq \pi_1(X,x)$ by definition. Further, if the points $x$ and $y$ are path-connected in $X$, the objects $x$ and $y$ are isomorphic in $\Pi_1(X)$. Hence, the isomorphism classes of $\Pi_1(X)$ gives us $\pi_0(X)$. Indeed, the fundamental groupoid can be seen as `the collection of fundamental groups at every connected component'. But, $\Pi_1(X)$ has an other advantage over $\pi_1(X,x)$ than just generality: The fundamental groupoid is not dependant on a basepoint $x\in X$. As such, it has far nicer categorical properties. For example, the Seifert-van Kampen theorem for fundamental groupoids does not require a common point \cite{brown} and works for every  open covering, which is something we will heavily exploit in the upcoming section. An other advantage, which will not play a role in this paper, is that the fundamental groupoid can be defined categorically (or axiomatised), as shown in \cite{p3}.

\subsubsection{Calculating $\Pi_1(\R X)$}

The following results will enable us to calculate the fundamental groupoid of a real binoid scheme. In order to state them, however, we need to introduce a few notations:

Let $I$ be a set. Denote by $\fb(I)$ the poset of all proper subsets. For a natural number $n\geq 1$, we write ${\bf n}:=\{1,2,\cdots,n\}$. Accordingly, we write $\fb({\bf n})$ instead of $\fb(\{1,\cdots,n\})$. For  a subset $S\subseteq {\bf n}$, we write $S^\c$ instead  of ${\bf n} \setminus S$. Let $J\subsetneq I$ be a proper subset. We set
$$\fb^\c(I:J)=\{X^\c | X\in \fb(I) \textnormal{ and } J\subseteq X\}.$$
That is
$$\fb^\c(I:J)=\{Y| I^\c\subsetneq Y\subseteq J^\c\}.$$
For a proper subset of $I \in\fb({\bf n})$, we will say that a 2-functor $\Phi:\fb({\bf n})^{\mathsf op}\rightarrow \Grpd$ satisfies the condition $B^I_J$ if the canonical functor
$${\sf colim}_{\fb^\c(I:J)} \Phi \to \Phi(I^\c)$$
is injective on objects. Here, $\Phi(I^\c)$ denotes the complement of $I$, as a subset of ${\bf n}$, i.e., $I^\c:={\bf n}\setminus I$.

For a topological space $X$ and a covering $\mathcal{U}=\{U_i\}_{i\in I}$, we use the notation $\hat{C}_{\mathcal{U}}(X)$ for the associated \^Cech complex. This is a natural poset (indeed a lattice) under inclusion. From here on onwards, we will identify $U_\alpha\in \hat{C}_{\mathcal{U}}(X)$ with its index set $\alpha$. Note that, by $U_\alpha$ we mean the element of the \^Cech covering of $\mathcal{U}$, not the corresponding open subset. The distinction is that, if $\alpha\neq \beta$, $U_\aleph\neq U_\beta$ as elements of the poset. However, as open subsets, it could be that $U_\alpha=U_\beta$. This way, we may use the notations introduced above.

We can now state our main theorems.

\begin{Th}[\cite{p3}, Thm. 3.2]\label{Seifert-vanKampen} Let $X$ be a topological space and $\mathcal{U}=\{U_i\}_{i\in I}$ an open covering. There exists an equivalence of categories
$$\Pi_1(X)\simeq\tcolim_{\hat{C}_{\mathcal{U}}(X)}\Pi_1.$$
\end{Th}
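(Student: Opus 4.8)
The plan is to deduce this from the classical groupoid form of the Seifert--van Kampen theorem together with a comparison between the ordinary colimit and the 2-colimit of groupoids. The 2-colimit is the correct receptacle precisely because, as emphasised above, $\Pi_1$ is only well behaved up to equivalence: replacing each piece $\Pi_1(U_\sigma)$ by an equivalent groupoid must not change the answer, and this is exactly the property that distinguishes the 2-colimit from the ordinary colimit.

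First I would reduce to a statement about an ordinary colimit. Working with the intersection-closed family $\{U_\sigma\}$ of objects of $\hat{C}_{\mathcal{U}}(X)$, choose a set $A\subseteq X$ of base points meeting every path component of every single, double and triple intersection. Brown's version of the theorem \cite{brown} then yields an isomorphism
$$\Pi_1(X,A) \cong \colim_{\hat{C}_{\mathcal{U}}(X)} \Pi_1(-,A)$$
of groupoids, the colimit being the ordinary one computed as in \cite{gz}. Since $A$ meets every path component of $X$ and of each $U_\sigma$, the inclusions $\Pi_1(U_\sigma,A)\hookrightarrow \Pi_1(U_\sigma)$ and $\Pi_1(X,A)\hookrightarrow \Pi_1(X)$ are equivalences, so up to equivalence the left-hand side is $\Pi_1(X)$ and, inside a 2-colimit, each term $\Pi_1(U_\sigma,A)$ may be replaced by $\Pi_1(U_\sigma)$.

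The heart of the argument is to identify this ordinary colimit with $\tcolim_{\hat{C}_{\mathcal{U}}(X)}\Pi_1$. In general the two disagree, but they agree up to equivalence once every transition functor of the diagram is injective on objects; this is the role of the stretching construction and of the injectivity condition $B^I_J$. Concretely, I would stretch the diagram $\Pi_1(-,A)$ --- adding isomorphic copies of objects to each term --- so that all inclusion-induced functors become injective on objects, observing that stretching alters neither the equivalence type of the terms nor, up to natural isomorphism, the functors, and hence leaves both the ordinary colimit and the 2-colimit unchanged up to equivalence. On the stretched diagram the comparison result (Theorem~\ref{colim=2-colim}) makes the ordinary colimit and the 2-colimit coincide, yielding
$$\tcolim_{\hat{C}_{\mathcal{U}}(X)} \Pi_1 \; \simeq \; \colim_{\hat{C}_{\mathcal{U}}(X)} \Pi_1(-,A) \; \simeq \; \Pi_1(X).$$

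The step I expect to be the main obstacle is this colimit-versus-2-colimit comparison: one must check that, after stretching, the injectivity-on-objects hypotheses $B^I_J$ genuinely hold simultaneously over the whole poset $\hat{C}_{\mathcal{U}}(X)$, and that the ordinary colimit of the stretched diagram is computed correctly via the generators-and-relations description of \cite{gz}. By contrast, the van Kampen input is a black box, and extracting a coherent base-point set $A$ and verifying its hypotheses, while requiring some care over triple intersections, is a routine secondary point. An alternative, more homotopy-theoretic route would be to observe that $X$ is the homotopy colimit of the diagram of intersections $U_\sigma$ and that $\Pi_1$ carries homotopy colimits of spaces to 2-colimits of groupoids; this avoids base points but trades them for the machinery of homotopy colimits.
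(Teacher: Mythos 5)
The paper itself offers no proof of this statement: it is imported from \cite{p3}, where it is obtained by characterising $\Pi_1$ as the terminal costack on the lattice of open sets --- a route quite different from yours. Your architecture --- Brown's many-base-point van Kampen theorem giving $\Pi_1(X,A)\cong\colim_{\hat{C}_{\mathcal{U}}(X)}\Pi_1(-,A)$ as an ordinary colimit, followed by a comparison of that ordinary colimit with the 2-colimit --- is nonetheless a sensible reconstruction, and it runs on exactly the machinery the paper sets up in Theorem \ref{colim=2-colim}. The first reduction, $\tcolim\Pi_1\simeq\tcolim\Pi_1(-,A)$, is sound: the inclusions $\Pi_1(U_\sigma,A)\hookrightarrow\Pi_1(U_\sigma)$ are equivalences commuting strictly with the transition functors, and 2-colimits are invariant under such replacements.

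The gap sits in the bridge $\colim\Pi_1(-,A)\simeq\tcolim\Pi_1(-,A)$. You route it through stretching and then assert that stretching ``leaves both the ordinary colimit and the 2-colimit unchanged up to equivalence'' because it only alters the terms up to equivalence and the functors up to natural isomorphism. For the 2-colimit that is correct; for the ordinary colimit it is a non sequitur. Ordinary colimits of groupoids are precisely \emph{not} invariant under replacing a functor by an isomorphic one or a term by an equivalent one --- that failure is the raison d'\^etre of the 2-colimit --- so preservation of $\colim$ under stretching would require a separate argument specific to that construction, and this is the load-bearing step of your whole proof. Fortunately the repair makes stretching unnecessary: in the diagram $\Pi_1(-,A)$ every groupoid has object set $A\cap U_\sigma$ and every transition functor is an inclusion of subsets of $A$ on objects; since $\fb^{\c}(I:J)$ is closed under unions of index sets, the object set of $\colim_{\fb^{\c}(I:J)}\Phi$ is just $\bigcup_Y (A\cap U_Y)\subseteq A\cap U_{I^{\c}}$, so every condition $B^I_J$ holds on the nose and Theorem \ref{colim=2-colim} applies directly to $\Pi_1(-,A)$. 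Two smaller points: Theorem \ref{colim=2-colim} is stated only for finite covers, whereas the statement allows arbitrary $I$, so an exhaustion argument is still needed in general; and Brown's conclusion is a coequalizer over single and double intersections, so you should also note (it is true, but needs the one-line check) that this agrees with the colimit over the full poset $\hat{C}_{\mathcal{U}}(X)$.
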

\begin{Th}[\cite{p7}, Cor. 5.4]\label{colim=2-colim} Let $X$ be a topological space, $I={\bf n}$ a finite set, $\mathcal{U}:=\{U_i\}_{i\in I}$ an open covering of $X$ and $\mathfrak{F}:\hat{C}_{\mathcal{U}}(X)\rightarrow \mathsf{Groupoids}$ a strict 2-functor. The natural functor
$$\colim_{\hat{C}_{\mathcal{U}}(X)}\mathfrak{F}\rightarrow\tcolim_{\hat{C}_{\mathcal{U}}(X)}\F$$
is an equivalence of categories if the following conditions hold:
\begin{itemize}
	\item[B1)] The condition $B_\emptyset^{\bf k}$ holds for any integer $k=n-1,n-2$.
	\item[B2)] Let $1\leq k\leq n-2$ be an integer and $J\subseteq \{k+2,\ldots,n\}$ and set with $|J|\geq n-k-2$. The condition $B_J^I$ holds, where $I:={\bf k}\sqcup J$.
\end{itemize}
\end{Th}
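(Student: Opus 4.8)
The plan is to compare the ordinary colimit with the $2$-colimit by means of the stretching construction introduced above. The comparison functor $\colim_{\hat C_{\mathcal U}(X)}\mathfrak F\to\tcolim_{\hat C_{\mathcal U}(X)}\mathfrak F$ exists by the universal property, and the whole point is that the $2$-colimit of a strict diagram of groupoids becomes an \emph{ordinary} colimit as soon as every structure functor in the diagram is injective on objects. So I would first stretch $\mathfrak F$, object by object along the poset, into an isomorphic strict diagram $\mathfrak F'$ whose every structure functor is injective on objects; since stretching only adds isomorphic copies of objects, one has $\tcolim\mathfrak F\simeq\colim\mathfrak F'$. The comparison functor then factors as $\colim\mathfrak F\to\colim\mathfrak F'\simeq\tcolim\mathfrak F$, so it suffices to prove that the first functor is an equivalence, i.e. that the objects added by stretching are redundant in the colimit under the hypotheses B1 and B2.

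The core lemma I would isolate is the following: if for every subset the canonical \emph{latching functor} $\colim_{I^\c\subsetneq \alpha\subsetneq\mathbf n}\mathfrak F(\alpha)\to\mathfrak F(I^\c)$ (this is exactly the condition $B^I_\emptyset$) is injective on objects, then $\colim\mathfrak F\to\tcolim\mathfrak F$ is an equivalence. This is proved by filtering $\hat C_{\mathcal U}(X)$ by the cardinality of the index set and constructing both colimits one stage at a time. At the stage corresponding to $I^\c$ one glues $\mathfrak F(I^\c)$ onto the colimit built so far along its latching object; when the latching functor is injective on objects this gluing is a pushout along a functor that is injective on objects, i.e. a cofibration of groupoids, hence a homotopy pushout, so the ordinary pushout coincides with the $2$-categorical one. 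Injectivity on objects is exactly what guarantees that no two distinct objects of $\mathfrak F(I^\c)$ get wrongly identified and that no connecting isomorphism is spuriously collapsed to an identity; this is the difference between the honest $2$-colimit and the over-collapsed ordinary colimit, in the sense of the description of colimits of groupoids above.

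It then remains to deduce the full latching conditions $B^I_\emptyset$ for all $I$ from the finitely many hypotheses B1 and B2, and this combinatorial step is where I expect the main difficulty to lie. The idea is to use the fixed total order on $\mathbf n$ to reinsert the elements of $I$ one at a time in increasing order, thereby writing the full latching object $\colim_{I^\c\subsetneq\alpha\subsetneq\mathbf n}\mathfrak F(\alpha)$ as an iterated pushout of the partial latching objects $\colim_{\fb^\c(I:J)}\mathfrak F$, which are governed by the conditions $B^I_J$. Each reinsertion is a pushout of groupoids, and injectivity on objects must be shown to propagate through it, using the conditions $B^I_J$ for the intermediate $J$ together with the lower-codimension conditions already established. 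Since groupoids are homotopy $1$-types, the obstructions to injectivity are concentrated in low codimension, which is why it is enough to control complements $I^\c$ of cardinality one and two; these are precisely the two top cases $k=n-1,n-2$ of B1 and the range $|J|\geq n-k-2$ of B2. The delicate part is verifying that the pushout decomposition preserves injectivity on objects at each step and that genuinely no higher-codimension condition is required, which calls for a double induction (on $n$ and on the number of reinserted indices) with careful bookkeeping, via the stretching construction, of which objects of the successive partial colimits become isomorphic but must be kept distinct.
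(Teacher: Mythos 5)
First, a point of comparison: the paper does not actually prove this statement. Its ``proof'' consists of the citation to \cite[Cor.\ 5.4]{p7} together with the single observation that one must pass to the dual poset (via $X\mapsto X^\c$) because the functors used here are contravariant. So the only content in the paper that your attempt could be checked against is that dualisation remark, which you do not address; everything else in your proposal is an attempted reconstruction of the external result. That said, your overall architecture is the expected one and is consistent with how the theorem is used later in the paper: functors injective on objects are the cofibrations of groupoids, pushouts along them are homotopy pushouts, 2-colimits of diagrams of groupoids are homotopy colimits, and one filters the \^Cech poset by the cardinality of the index sets so that each stage is an attachment along a latching functor. That part of the sketch is sound in spirit.

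Two steps are genuinely gapped, however. (i) The opening reduction via stretching does not work as stated: stretching replaces $\mathfrak F$ by a diagram $\mathfrak F'$ that is isomorphic to $\mathfrak F$ only through the 2-morphisms $\theta$, not through a strict map of diagrams, so there is no induced functor $\colim\mathfrak F\to\colim\mathfrak F'$ and the claimed factorisation of the comparison functor does not exist; moreover, showing that ``the objects added by stretching are redundant in the colimit under B1 and B2'' is simply a restatement of the theorem, so nothing has been reduced. (ii) The combinatorial heart of \cite[Cor.\ 5.4]{p7} --- that the specific finite, order-dependent list of \emph{partial} latching conditions $B^I_J$ in B1 and B2 (note that $I=\mathbf k\sqcup J$ always has $|I^\c|\leq 2$, and the admissible $J$ are constrained relative to the chosen total order) suffices to make every attachment stage a homotopy pushout --- is exactly the part you defer to ``careful bookkeeping'' and a ``double induction''. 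The remark that groupoids are homotopy 1-types so obstructions live in low codimension is a heuristic for why \emph{some} finite list could suffice, not an argument that \emph{this} list does. Since that reduction is the entire nontrivial content of the cited corollary, the proposal as written is an outline of a plausible strategy rather than a proof.
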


\begin{proof} The proof of this is given in \cite[Cor 5.4]{p7}. However, in this paper, we used the dual of the poset
$\fb^\c(I:J)$ (duality is given by $X\mapsto X^\c$) and covariant functors. Since our functors are contravariant we can apply that result of \cite{p7}.
\end{proof}

\begin{Rem} We can always manipulate our 2-functor $\F$ in such a way that the functors ${\sf colim}_{\fb^\c(I:J)} \Phi \to \Phi(I^\c)$ become injective on objects. This follows from our discussion on stretching on page \pageref{stretching}. 
\end{Rem}
\begin{Rem}
The conditions of this theorem can be simplified if the functors $\F(U_\alpha)\rightarrow \F(U_\beta), \alpha\leq \beta$ are surjective on objects. In such a case, we only need to ensure that the functors $\F(U_\alpha)\rightarrow \F(U_\beta)$ are injective on objects. This will imply bijectivity on objects of the functors $\F(U_\alpha)\rightarrow \F(U_\beta)$, and as such, ${\sf colim}_{\fb^\c(I:J)} \Phi \to \Phi(I^\c)$ will also be bijective on objects.
\end{Rem}

As a direct consequence of Theorem \ref{colim=2-colim}, we get this corollary:

\begin{Co}\label{coprod=2-coprod} Let $I$ be a finite set and $\{\cG_i\}_{i\in I}$ a groupoid for every $i\in I$. The coproduct and 2-coproduct of the $\cG_i$-s is equivalent. Hence, we can use the common notation $\coprod_{i\in I}\cG_i$ to mean both the coproduct and the 2-coproduct.
\end{Co}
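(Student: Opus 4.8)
The plan is to derive Corollary \ref{coprod=2-coprod} as a clean special case of Theorem \ref{colim=2-colim}. The coproduct $\coprod_{i\in I}\cG_i$ is the colimit of a diagram indexed by a discrete category (just the set $I$ with no non-identity morphisms), so first I would realise this discrete index as the \^Cech poset of a suitable cover. The natural choice is a topological space $X=\coprod_{i\in I} U_i$ that is itself a disjoint union of $|I|$ pieces, with the cover $\mathcal{U}=\{U_i\}_{i\in I}$. Because the $U_i$ are pairwise disjoint, every intersection $U_\alpha$ with $|\alpha|\geq 2$ is empty, so the \^Cech complex $\hat{C}_{\mathcal U}(X)$ has only the top-level elements $U_i$ as nonempty simplices and no higher strata. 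Setting $\F(U_i):=\cG_i$ then gives a strict $2$-functor $\F:\hat{C}_{\mathcal U}(X)\to\mathsf{Groupoids}$ whose $\tcolim$ is by definition the $2$-coproduct and whose $\colim$ is the ordinary coproduct.

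Once the problem is framed this way, the content reduces to checking that the hypotheses B1) and B2) of Theorem \ref{colim=2-colim} hold \emph{vacuously or trivially} in this degenerate configuration. The key observation is that all the relevant overlap groupoids $\Phi(I^\c)$ for the sets $I$ appearing in B1) and B2) correspond to intersections of two or more of the disjoint $U_i$, hence to the empty open set, so the groupoids involved are empty (or the index posets $\fb^\c(I:J)$ are empty). The canonical functor $\mathsf{colim}_{\fb^\c(I:J)}\Phi\to\Phi(I^\c)$ is then a functor out of, or into, an empty/trivial groupoid, which is automatically injective on objects. I would verify this for the two cases $k=n-1,n-2$ in B1) and for the range $1\leq k\leq n-2$ in B2), in each case exhibiting that the complement $I^\c$ has cardinality at least two and hence indexes an empty overlap, making the injectivity condition degenerate.

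Concretely, I would argue as follows: for $|I|=n$, take $X=\coprod_{i\in I}\{\pt_i\}$ (or any disjoint union realising the $\cG_i$ as local fundamental groupoids, though for the abstract statement it suffices to treat $\F$ formally). Since $U_i\cap U_j=\emptyset$ for $i\neq j$, a simplex $\sigma$ of the nerve is nonempty only when $|\sigma|=1$. Therefore, in the conditions $B^I_J$, whenever $|I^\c|\geq 2$ the groupoid $\Phi(I^\c)$ is the empty groupoid and the source $\mathsf{colim}_{\fb^\c(I:J)}\Phi$ is likewise built from empty groupoids; a functor between such groupoids is trivially injective on objects. Because B1) only invokes $k=n-1$ and $k=n-2$, giving $|I^\c|=1$ and $|I^\c|=2$ respectively, and B2) forces $|I^\c|\geq 2$ throughout, every required condition holds. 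Theorem \ref{colim=2-colim} then yields that $\colim_{\hat{C}_{\mathcal U}(X)}\F\to\tcolim_{\hat{C}_{\mathcal U}(X)}\F$ is an equivalence, i.e.\ $\coprod_{i\in I}\cG_i\simeq\coprod^{2}_{i\in I}\cG_i$, proving the claim.

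The main obstacle I anticipate is purely bookkeeping rather than conceptual: one must carefully match the combinatorial indexing in B1)/B2) (the sets ${\bf k}$, $J$, $I={\bf k}\sqcup J$, and the complement conventions $I^\c={\bf n}\setminus I$) against the degenerate cover, to be certain that no condition secretly demands checking a \emph{single-index} overlap where the groupoid is the genuinely nontrivial $\cG_i$ rather than the empty one. The subtle point is the boundary case in B1) with $k=n-1$, where $|I^\c|=1$; there $\Phi(I^\c)=\cG_i$ is nonempty, so I must check that the relevant index poset $\fb^\c(I:J)$ is empty or that the comparison functor is an identity, so that injectivity on objects still holds trivially. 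Verifying this one genuinely-nonempty boundary case is the only place where a short explicit argument is needed.
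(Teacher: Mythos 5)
Your proposal is correct and follows essentially the same route as the paper: the paper's proof also extends the discrete diagram to a $2$-functor on $\fb({\bf n})$ by sending every subset with at least two elements to the empty groupoid and then invokes Theorem \ref{colim=2-colim}, leaving the verification of B1) and B2) implicit ("the result now follows"). Your write-up merely supplies the details the paper omits, including the correct resolution of the one nontrivial boundary case ($k=n-1$ in B1), where the source colimit is empty so injectivity on objects is vacuous).
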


\begin{proof} Let $I={\bf n}$ and consider the 2-functor $\F:\fb({\bf n})\rightarrow \mathsf{Groupoids}$, given as follows: We set $i\mapsto \cG_i$, $i\in I$. For all $S\subseteq\fb({\bf n})$ with $|S|\geq 2$, we set $\F(S)=\emptyset$. The result now follows.
\end{proof}

\subsubsection{Calculating the fundamental groupoid of $\R X$}

Let $X$ be a quasi-separated binoid scheme, with an affine covering $\A:=\{\mspec(M_i)\}_{i\in\{1,\ldots ,k\}}$. Our aim is to calculate $\Pi_1(\R X)$. For this, consider $\hat{C}_\A(X)$, the underlying poset of the \^Cech complex associated to the affine covering of $X$. The element of $\hat{C}_\A(X)$ corresponding to $\mspec(M_i)$ will simply be denoted by $i$.

We define a functor from the poset $\hat{C}_\A(X)$ to the category of topological spaces
$$\cal{R}:\hat{C}_\A(X)\rightarrow \mathsf{Top},$$
given by
$$i\mapsto \coprod_{r\in\Adm(M_i)} \RSpec(M_i(\r)^{\x}\otimes C_2^\circ).$$
Functoriality of this association is clear, since for all $ij\rightarrow i$, we get $M_i\rightarrow M_{ij}$, hence $\RSpec(M_{ij})\rightarrow\RSpec(M_i)$. Corollary \ref{pi0group} now gives us the desired result.

Using this, we define a strict 2-functor
$$\fR:\hat{C}_\A\rightarrow \mathsf{Grpd}$$
with values in the 2-category of groupoids, given by
$$i\mapsto \Pi_1\left(\coprod_{r\in\Adm(M_i)} \RSpec(M_i(\r)^{\x}\otimes (C_2^\circ)^{n_\r})\right)\simeq\coprod_{\r\in\Adm(M_i)}\cD^{n_\r}.$$
Theorem \ref{Seifert-vanKampen} and the fact that we can replace equivalent groupoids with equivalent ones in a 2-colimit, says that the fundamental groupoid of $\R X$ is equivalent to the 2-colimit of $\fR$. Since calculating the colimit is significantly easier than calculating the 2-colimit, we wish to use Theorem \ref{colim=2-colim}. Our discussion regarding the stretching of functors on page \pageref{stretching} allows us to do just that.

\section{Applications}\label{subsec.app}

We aim to give a practical demonstration on how to use the above results to calculate the homology groups, as well as the fundamental groupoid of $\R X$, where $X$ is a binoid scheme. We will do so both for a general class (see the Stanley-Reisner case) as well as a specific example.

We point out, that if we glue binoid schemes on affine subsets, the obtained binoid scheme will be quasi-separated.

\subsection{Punctured spectra of Stanley-Reisner rings}

Let $M$ be a binoid. Since the union of prime ideals is prime, it has a unique maximal prime ideal, denoted $\m_M$ or simply $\m$. It consists of the set of non-invertible elements and is the unique closed point of $\mspec(M)$. Its complement is denoted by $\mspec^\bullet(M):=\mspec(M)\setminus\m_M$. By restricting the structure sheaf $\mathcal{O}_{\mspec(M)}$ to $\mspec^\bullet(M)$, we get an open subscheme, called the \emph{punctured spectrum} of $M$.

A simplicial complex $\Delta$ on $ {\bf n}=\{1, \ldots, n\} $ defines the (multiplicatively written) binoid \[ M_\Delta=\{ X_1^{r_1} \cdots X_n^{r_n}  ,\, \prod_{i \in I} X_i =0 \text{ for nonfaces } I  \} \] and its binoid algebra is the \emph{Stanley-Reisner Ring of $\Delta$}. We want to compute the fundamental groupoid of the punctured spectrum of the Stanley-Reisner ring with our methods. In other words, we want to calculate $\Pi_1(\RSpec^\bullet(M_\Delta))$, or $\R^\bullet[\Delta]$ for short.

It is clear that there exists an open covering of $\R^\bullet[\Delta]$, given by
$$\{D(\R[X_i])\}_{i\in {\bf n}}:={\RSpec((M_\Delta)_{X_i})}_{i\in {\bf n}}.$$

\begin{Th}
	\label{simplicialcomplexcompute}
	Let $\Delta$ denote a simplicial complex on the vertex set $ \{1, \ldots, n\} $. Then the fundamental groupoid of the real punctured spectrum of the Stanley-Reisner ring $\R^\bullet[\Delta] $ is given as the following groupoid: The objects are	
	\[    \{  F \text{ facet of } \Delta ,\, \sigma_F :F \rightarrow \{+,-\}  \}  . \]
	The isomorphisms are generated by
	\[	\alpha_{i,[\sigma_F,\tau_G]}:  (F, \sigma) \rightarrow (G, \tau)    \]
	whenever there exists $i \in F \cap G$ with $\sigma(i) = \tau(i)$. These are subject to the relations generated by:
	\begin{itemize}
		\item[R1)] For a fixed $i\in \{1,\ldots, n\}$, $\alpha_{i,[\sigma_F,\rho_H]}=\alpha_{i,[\tau_G,\rho_H]}\circ\alpha_{i,[\sigma_F,\tau_G]}$. That is, the diagram
		$$\xymatrix@C=4em{(F,\sigma)\ar[r]^{\alpha_{i,[\sigma_F,\tau_G]}}\ar@/^2pc/[rr]^{\alpha_{i,[\sigma_F,\rho_H]}} & (G,\tau)\ar[r]^{\alpha_{i,[\tau_G,\rho_H]}} & (H,\rho) }$$
		commutes;
		\item[R2)] For every $\{ i,j \} \subseteq F \cap G$ with $\sigma(i) = \tau(i)$ and $\sigma(j) = \tau(j)$, we have
		\[  \beta_{i,[\sigma_F,\tau_G]} = \beta_{j,[\sigma_F,\tau_G]}: (F,\sigma) \rightarrow (G, \tau) \]
		Here, $\beta$ denotes a concatenation of some $\alpha$-s.
	\end{itemize}
\end{Th}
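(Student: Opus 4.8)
The strategy is to apply the machinery assembled in the preceding section directly to the open covering $\{D(\R[X_i])\}_{i\in{\bf n}}$ of $\R^\bullet[\Delta]$. First I would identify the localisations: $(M_\Delta)_{X_i}$ is the binoid obtained from $M_\Delta$ by inverting $X_i$, which is nonzero precisely when $i$ belongs to some face, and whose $\mspec$ corresponds to the faces of $\Delta$ containing $i$ (equivalently, to the star of the vertex $i$). More generally, for a subset $S\subseteq{\bf n}$, the multiple intersection $U_S=\bigcap_{i\in S}D(\R[X_i])$ is $\RSpec((M_\Delta)_{X_S})$, which is nonempty exactly when $S$ is a face of $\Delta$, and whose admissible decomposition is indexed by the facets containing $S$. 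This is the step where the combinatorics of $\Delta$ enters and replaces the abstract $\Adm(M)$ bookkeeping.

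Next I would compute each groupoid $\fR(S)$ via Corollary \ref{pi0group}. For a facet (or more generally a face) the relevant group of units $((M_\Delta)_{X_S}(\r))^{\x}$ is a free abelian group on the vertices of the corresponding facet, so after tensoring with $C_2^\circ$ we get $(C_2^\circ)^{|F|}$, and hence $\fR(S)\simeq\coprod_{F\supseteq S}\cD^{|F|}$. Concretely, the objects are pairs $(F,\sigma_F)$ with $F$ a facet containing $S$ and $\sigma_F\colon F\to\{+,-\}$ a sign vector — this is exactly the object set claimed in the theorem, recorded at the level $S=\emptyset$ (or rather as the union over the maximal pieces). The $\sigma(i)=+1$ versus $-1$ choices are precisely the $2^{n_\r}$ objects of $\cD^{n_\r}$, i.e. the connected components coming from the $\pm1$ coordinates.

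I would then feed the strict 2-functor $\fR$ into Theorems \ref{Seifert-vanKampen} and \ref{colim=2-colim}: the fundamental groupoid is the 2-colimit of $\fR$ over $\hat{C}_\A(X)$, and after stretching to arrange injectivity on objects (the remark following Theorem \ref{colim=2-colim}), this 2-colimit is computed by the ordinary colimit in $\mathsf{Grpd}$. Using the explicit colimit recipe for groupoids recalled earlier, the generating isomorphisms come exactly from the one-dimensional overlaps $U_{\{i\}}$: an object $(F,\sigma)$ and an object $(G,\tau)$ become connected through the piece indexed by a vertex $i$ precisely when $i\in F\cap G$ and the sign data agree at $i$, i.e. $\sigma(i)=\tau(i)$. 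This produces the generators $\alpha_{i,[\sigma_F,\tau_G]}$. The transitivity relation R1 is the functoriality/compatibility of the connecting isomorphisms inside a single component $U_{\{i\}}$ (all objects sharing vertex $i$ with matching sign lie in one connected piece, so the connecting maps compose as stated), while R2 encodes the relations imposed at the two-fold overlaps $U_{\{i,j\}}$: when $\{i,j\}\subseteq F\cap G$ with matching signs, the gluings along $i$ and along $j$ must agree, forcing $\beta_{i}=\beta_{j}$.

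**Main obstacle.** The delicate point is the passage from the 2-colimit to the ordinary colimit, i.e. verifying that the hypotheses B1 and B2 of Theorem \ref{colim=2-colim} actually hold for this $\fR$ (after the stretching adjustment), and then reading off precisely which relations the colimit recipe imposes. Each $\fR(S)$ is a discrete groupoid, so there are no local automorphisms to track; the entire content is combinatorial, living in how objects get identified and how the identifying isomorphisms interact across the faces of $\Delta$. The careful part is checking that no relations beyond R1 and R2 survive — equivalently that triple and higher overlaps $U_{\{i,j,k\}}$ impose nothing new beyond what R1 and R2 already force, which is exactly what conditions B1--B2 guarantee. I expect the bulk of the rigorous work to be this bookkeeping of the colimit of groupoids, rather than anything analytic.
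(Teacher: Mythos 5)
Your overall architecture matches the paper's: cover $\R^\bullet[\Delta]$ by the $D(\R[X_i])$, identify each multiple overlap with $\RSpec$ of a localisation of $M_\Delta$, compute the local fundamental groupoids as discrete groupoids, pass from the 2-colimit of Theorem \ref{Seifert-vanKampen} to an ordinary colimit via Theorem \ref{colim=2-colim} after stretching, and read generators off the single overlaps and relations off the double overlaps. The gap is in your computation of the local pieces, and it is not cosmetic. For a face $S$, the localisation $(M_\Delta)_{X_S}$ has no non-trivial idempotents (one checks that the only nonzero idempotent is $1$), so its admissible decomposition is a single point; it is not indexed by the facets containing $S$. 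Its unit group is $\Z^S$, generated by the inverted variables, not $\Z^F$ for a facet $F\supseteq S$. Consequently $\Pi_1(U_S)\simeq\cD^{|S|}$, with $2^{|S|}$ objects, whereas your model $\coprod_{F\supseteq S}\cD^{|F|}$ has $\sum_{F\supseteq S}2^{|F|}$ objects and already the wrong $\pi_0$ for, say, the complex with facets $\{1,2\}$ and $\{1,3\}$ at $S=\{1\}$, where $D(\R[X_1])$ has exactly two contractible components.

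The error then propagates: a disjoint union of discrete groupoids is itself discrete, so with your local models the whole diagram contains no non-identity morphisms, and its colimit would be the discrete groupoid on the objects $(F,\sigma)$ --- the generators $\alpha_{i,[\sigma_F,\tau_G]}$ which you assert ``come from the one-dimensional overlaps'' have no source in your model. What the paper does instead is introduce, for each face $I$, the auxiliary groupoid $P(I)$ whose objects are the pairs $(F,\sigma)$ with $F\supseteq I$ a facet, together with a \emph{unique} isomorphism $(F,\sigma)\to(G,\tau)$ whenever $\sigma$ and $\tau$ agree on $I$. This $P(I)$ is equivalent to $\cD^{|I|}$, hence a legitimate replacement inside the 2-colimit, but unlike either $\cD^{|I|}$ or your disjoint union it carries exactly the connecting isomorphisms that become the $\alpha_{i,[\sigma_F,\tau_G]}$, and the transition functors $P(J)\to P(I)$ are injective on objects, which is what allows one to verify B1) and B2) and invoke Theorem \ref{colim=2-colim}. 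You correctly flag that verification as the main obstacle but defer it; combined with the incorrect local model, the argument as written does not go through. The repair is to replace your $\fR(S)$ by $P(S)$ and then carry out the injectivity checks and the reduction to subsets of size one and two.
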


\begin{proof}
	For each subset $\emptyset\neq I \subseteq \{1, \ldots, n\}$, define the groupoid $P(I)$, whose objects are
	\[ \{(F,\sigma),\, F \textnormal{ a facet of }\Delta \textnormal{ with } I \subseteq F \} \, \]
	with isomorphisms
	\[\alpha_{I,[\sigma_F,\tau_G]}: (F,\sigma) \rightarrow  (G,\tau) \text{,  whenever } \sigma  \text{ and } \tau \text{ agree on } I, \  \]
	subject to the relations
	$\alpha_{I,[\sigma_F,\rho_H]}=\alpha_{I,[\tau_G,\rho_H]}\circ\alpha_{I,[\sigma_F,\tau_G]}$. This is empty if $I$ is a nonface. For $I \subseteq J$, there are natural injections $P(J) \rightarrow P(I)$ sending $(F, \sigma)$ to itself and $\alpha_{J,[\sigma_F,\tau_G]}$ to $\alpha_{I,[\sigma_F,\tau_G]}$.
	
	The groupoid $P(I)$ is equivalent to the discrete groupoid whose objects are $ \{ +,- \}^{I} $. To see this, first observe that the relation $\alpha_{I,[\sigma_F,\rho_H]}=\alpha_{I,[\tau_G,\rho_H]}\circ\alpha_{I,[\sigma_F,\tau_G]}$ implies that $P(I)$ is a groupoid, as there is at most one isomorphism between any two objects. To see that it has exactly $2^I$ isomorphism classes of objects, let $I \subseteq F,G$ be two extensions of $\rho:I \rightarrow \{ +, -\}$ to facets. There is a unique isomorphism relating them. The group of units of the binoid $M_\Delta$, localised at the elements of $I$, is $\Z^I$ (which is $0$ if $I$ is a nonface) and the tensorisation with $\Z/(2)$ gives $ \{+,- \}^{I} $. This is equivalent to the real fundamental groupoid $\Pi_1 (D( \R[\prod_{i \in I}  X_i] ))$ by Theorem \ref{reductiontogroupsnonsep}.
	
	Hence, we may compute $\Pi_1 ( \bigcup_{i = 1}^n D(\R[X_i]) )\simeq\Pi_1(\R^\bullet[\Delta])$ as the 2-colimit of $P$. We claim that the conditions of Theorem \ref{colim=2-colim} are fulfilled, thereby allowing us to compute it as the colimit of $P$. As mentioned above, the maps $P(J) \rightarrow P(I)$, for $I \subseteq J$, are injective on objects. Moreover, for each collection $I_1, \ldots , I_s$, we have that  \[ \operatorname{colim}_{j} P(I_j) =  P(I_1) \cup \ldots \cup P(I_s)   \rightarrow  P(I_1 \cap \ldots \cap I_s) \] is injective on objects.
	
	For the computation of the colimit of $P$ only the subsets $I$ with $1$ or $2$ elements are relevant. Its objects consist of the set of all $(F, \sigma)$, with $F$ a facet of $\Delta$. The isomorphisms are generated by the generators of the isomorphisms of $P(\{i\} )$, modulo the relations coming from $P(\{i,j\})$.
\end{proof}

We give some easy examples of how to apply this method. For these examples, there are many other ways to compute the fundamental groupoid.

\begin{Exm}
	We consider the full simplicial complex on $\{1, \ldots ,n\}$. There is only the facet $F=\{1, \ldots ,n\} $
	and there are $2^n$ $\pm$-tuples. For $n=1$, the objects are $+$ and $-$ and there is only one automorphism for each object. For $n=2$, the generating isomorphisms are
	$$\xymatrix@C=1em@R=3em{ \pt\limits_{(+,+)}\ar@/^/[rr] & & \pt\limits_{(+,-)}, \pt\limits_{(-,+)}\ar@/^/[rr] & & \pt\limits_{(-,-)},\pt\limits_{(+,+)}\ar@/^/[rr]& & \pt\limits_{(-,+)},\pt\limits_{(+,-)}\ar@/^/[rr]& & \pt\limits_{(-,-)} }$$
	and the only relation is the identity. As an example, the isomorphism $(+,+)\longrightarrow (+,-)$ would be $\alpha_{1,[\sigma_F,\tau_F]}$ in our notation, where $F=\{1,2\}$ with $\sigma(1)=+,\sigma(2)=+$ and $\tau(1)=+,\tau(2)=-$. This groupoid is connected with 4 objects and 4 isomorphisms. Hence, it is equivalent to the groupoid with one element, together with one generating automorphism without relations. So, the fundamental group is $\Z$.
	
	For $n \geq 3$, we show that every generating isomorphism $\alpha_{i,[\sigma_F,\tau_F]}: (F, \sigma) \rightarrow (F,\tau)$ has a factorisation into isomorphisms, which are the restrictions of two subset isomorphisms. The maps $\sigma$ and $\tau$ agree at least on $i$. If they agree also on another spot, the statement is clear. If they agree only on $i$, we pick two spots $j,k$ where they differ. Then there exists an isomorphism $\varphi$ which fixes $i$ and $j$ and another isomorphism $\psi$ which fixes $i$ and $k$ such that  $\alpha_{i,[\sigma_F,\tau_F]}$ can be written as
	\[ (F, \sigma) \stackrel{ \varphi}{\rightarrow} (F, \rho) \stackrel{\psi }{\rightarrow} (F,\tau)  \, . \]
	Hence, the fundamental group is trivial.
\end{Exm}

\begin{Exm}
	We consider the simplicial complex on $\{1,2,3\}$, given by the facets $\{1,2\}, \{1,3\}, \{2,3\}$. There are twelve objects which are tupels of the form $(\pm,\pm,0)$ (and permutations), the generating isomorphisms are given whenever two such tuples coincide at a nonzero spot. As such, the groupoid can be visualised by the diagram
	\vspace{2em}
	$$\xymatrix@C=0.7em@R=-0.4em{ \pt\ar@/^/[rr]\ar@/^2em/[rrrr] & & \pt\ar@/^2em/[rrrrrr]\ar@/_2em/[rrrr] & & \pt\ar@/^2em/[rrrrrrrrrrrr]\ar@/_/[rr] & & \pt\ar@/_2em/[rrrrrr]\ar@/_2em/[rrrrrrrrrrrrrr] & & \pt\ar@/^/[rr]\ar@/^2em/[rrrr] & & \pt\ar@/_2em/[rrrr] & & \pt\ar@/^1em/[rrrr]\ar@/_/[rr] & & \pt\ar@/_2em/[rrrr] & & \pt\ar@/^/[rr]\ar@/^2em/[rrrr] & & \pt\ar@/_2em/[rrrr] & & \pt\ar@/_/[rr] & & \pt\\ && && && && && && && && && && && \\		
	+ & & + & & - & & - & & + & & + & & - & & - & & 0 & & 0 & & 0 & & 0 \\
	+ & & - & & + & & - & & 0 & & 0 & & 0 & & 0 & & + & & + & & - & & - \\
	0 & & 0 & & 0 & & 0 & & + & & - & & + & & - & & + & & - & & + & & - }$$
	Here, the upper arrows are induced from the sign functions taking $+$ as their value, and the bottom arrows are induced by the $-$ sign.
	
	The relations coming from R1) are already taken into consideration in the above diagram, by not drawing in `unnecessary' relations in the first place. An example of this is the following: By the construction, we have isomorphisms connecting $(+,+,0)$, $(+,-,0)$ and $(+,0,+)$ in all pairwise combinations. However, one of them is simply the composition of the other two by R1), so we omit it in the diagram.
	
	The only relations coming from R2) are the identities. As such, we have 12 objects with 18 isomorphisms. This is equivalent to one object with $18-(12-1)=7$ automorphisms. The fundamental group of the real punctured spectrum is the free group with $7$ generators.
\end{Exm}

\begin{Rem}
	The intersection of the spectrum of a simplicial complex $\Delta$ with the hyperplane $\{(x_1 , \ldots, x_n),\, x_i \geq 0,\, \sum_{i=1}^n x_i=1\}$ gives a geometric realization of the simplicial complex. It is homotopy equivalent to the  positive octant of the spectrum (the intersection with $(\R_+)^n$). The homotopy type of the intersection with $D( \prod_{i \in I} X_i)$ is the homotopy type of $|I|$ points (only the $+$-sign occurs). Similar to Theorem \ref{simplicialcomplexcompute} we can compute the fundamental groupoid of the simplicial complex (its geometric realisation) with the following groupoid: the objects are the facets, the isomorphisms are generated by
	\[	\alpha_i: F \rightarrow G \]
	for every $i \in F \cap G$ and with relations
	\[  \alpha = \beta :F \rightarrow G \]
	whenever  there exist $\{ i,j \} \subseteq F \cap G$.
	This computation of the fundamental group of a simplicial complex is dual to the computation of it as the edge-path group of the simplicial complex.
\end{Rem}

\subsection{Example 1:}

Let $I=\{1,2,3\}$ and
\begin{eqnarray*} M_1&=&\gen{x_1,y_1,z_1}/x_1^2y_1=z_1^2,\\
	M_2&=&\gen{x_2,y_2,z_2}/x_2y_2=z_2^2,\\
	M_3&=&\gen{x_3,y_3}.
\end{eqnarray*}
The gluing isomorphisms are given as
\begin{eqnarray*} \lambda_{12}:(M_1)_{x_1}\rightarrow (M_2)_{x_2} &|&  x_1\mapsto x_2, \hspace{1.28em} y_1\mapsto x_2^{-1}y_2, \hspace{1em} z_1\mapsto z_2\\
	\lambda_{23}:(M_2)_{y_2}\rightarrow (M_3)_{x_3}  &|& x_2\mapsto x_3y_3^2, \ y_2\mapsto x_3^{-1}, \ \ \ \ \ \ z_2\mapsto y_3.
\end{eqnarray*}
We denote the binoid scheme obtained in this way by $X$. 

\subsubsection{The homology groups}

We have $\R X=U_1 \cup U_2 \cup U_3$, where $U_i=\RSpec(M_i)$. The homology of $\mathbb{R}X$ can be computed as the homology of the chain complex
$$\cdots\to 0\to H_0(U_{123})\xto{d_2} H_0(U_{12})\oplus H_0(U_{13}) \oplus H_0(U_{23}) \xto{d_1} H_0(U_1)\oplus H_0(U_2)\oplus H_0(U_3).$$
Since $U_{123}\simeq \RSpec(\Z\times \Z)\simeq U_{13}$, it follows that the map $H_0(U_{123})\to H_0(U_{13})$ is an isomorphism. Hence, the same homology can be computed by the chain complex
$$\cdots \to 0\to 0\to H_0(U_{12})\oplus  H_0(U_{23})
\xto{d} H_0(U_1)\oplus H_0(U_2)\oplus H_0(U_3).$$
Since $M_i^\times=\{1\}$, we have $H_0(U_i)=\Z$. We also have  $U_{12}\simeq \RSpec(\N\times \Z)\simeq U_{23}$. Thus,
$$H_0(U_{12})\simeq H_0(U_{23})=\Z^2$$
and the map $d:\Z^4\to \Z^3$ is given by the matrix 
$$\begin{pmatrix} 1&1&0&0\\-1&-1&1&1\\0&0&-1&-1\end{pmatrix}.$$
It follows that $$H_i( \mathbb{R}X)=\begin{cases}0, & {\rm if} \ i\geq 2,\\ \Z^2, & {\rm if} \ i =1,\\ 
\Z, & {\rm if} \ i=0.\end{cases}$$ 

\subsubsection{The fundamental groupoid}

We wish to calculate the fundamental groupoid. We will give a more detailed description, since this involves calculating the 2-colimit. While calculating the 2-colimit of an arbitrary 2-functor can be quite difficult, the situation can be significantly simplified when the source 2-category is a finite poset and the target only involves finite discrete groupoids.
\newline

\noindent
\underline{Step 1:} We first draw our binoid scheme $X$, as follows:
$$\xymatrix{ & & & M_2\ar[ddr]\ar[ddl] & & & \\
			 M_1\ar[dd]\ar[drr] & & & & & & M_3\ar[dd]\ar[dll] \\
			 & & M_{12}\ar[ddr] & & M_{23}\ar[ddl] \\
			 (M_1)_{y_1}\ar[drrr] & & & & & & (M_3)_{y_3}\ar[dlll] \\
			 & & & M_{123} & & & }$$
Here, $M_{12}:=(M_1)_{x_1}\simeq (M_2)_{x_2}$, $M_{23}:=(M_2)_{y_2}\simeq (M_3)_{x_3}$ and $M_{123}$ is the generic point. Open sets are of the form
$$U=\{M_{123}\}, U=\{M_2, M_{12}, M_{23}, M_{123}\} \textnormal{ or } U=\{(M_1)_{y_1}, M_{12}, M_{23}, M_{123}\}.$$
In other words $U\subseteq X$ is open, if $M_\alpha\in U$ implies that every localisation of $M$ is also in $U$.

A binoid scheme has a canonical covering. In this case, it is
$$U_1:=\{M_1, (M_1)_{y_1}, M_{12}, M_{123}\}, \quad U_2:=\{M_2, M_{12}, M_{23}, M_{123}\}$$
and
$$U_3:=\{M_3, M_{23}, (M_3)_{y_3}, M_{123}\}.$$
This gives us the following poset $\hat{C}_{\A}(X)$ of the \^Cech covering of $X$:
$$\xymatrix{ U_1 & & U_2 & & U_3 \\
	U_{12}\ar[u]\ar[urr] & & U_{13}\ar[ull]\ar[urr] & & U_{23}\ar[u]\ar[ull] \\
	& & U_{123}\ar[ull]\ar[urr]\ar[u] & & }$$
Clearly, $U_{13}=U_{123}=\{M_{123}\}$ as sets. However, we will treat $U_{13}$ and $U_{123}$ as distinct objects of the poset $\hat{C}_{\A}(X)$, in order to use Theorem \ref{colim=2-colim}.
\newline

\noindent
\underline{Step 2:} We calculate the fundamental groupoid of every $U_i$. We start by simplifying them using Thm. \ref{reductiontogroupsnonsep}, by replacing every $U_i\in\hat{C}_\A(X)$ with $\coprod_{r\in\Adm(M_i)} M_i(\r)^{\x}\otimes C_2^\circ$. Note that, as every $U_i$, $U_{ij}$ and $U_{ijk}$ has a maximal binoid, their global section is going to agree with said maximal binoid. That is, $\mathcal{O}_X(U_\alpha)=M_\alpha$. We require topological separatedness here, so that the intersections have a maximal binoid.

For this, we first determine the idempotents of $M_i$. It is easily seen that none of the binoids have any non-trivial idempotents. As such, we obtain:
$$\xymatrix{ 1^\circ\ar[d]\ar[drr] & & 1^\circ\ar[drr]\ar[dll] & & 1^\circ\ar[dll]\ar[d] \\
			 C_2^\circ(x_2)\ar[drr] & & C_2^\circ(x_2)\wedge C_2^\circ(z_2)\ar[d]^{\id}  & & C_2^\circ(y_2)\ar[dll]^\rho \\
	& & C_2^\circ(x_2)\wedge C_2^\circ(z_2). & & }$$
Here, $x_2$ in $C_2^\circ(x_2)$ just indicates the generator. The only non-obvious morphism $\rho:C_2^\circ(y_2)\rightarrow C_2^\circ(x_2)\wedge C_2^\circ(z_2)$ is given by $y_2\mapsto x_2^{-1}z_2^2=x_2$.
\newline

\noindent
\underline{Step 3:} We compose these groupoids with $\RSpec(-)=\Hom_{\mathsf{bn}}(-,\R)$. In the case of $C_2^\circ(x_2)$, for example, this is $\{x_2\mapsto 1, x_2\mapsto -1\}$. These are all clearly determined by the sign. Since these are individual points, the objects of $\Pi_1(\RSpec(C_2^\circ(x_2)))$ can be identified by $+x_2$ and $-x_2$. Similarly for all the other binoids. These groupoids are fully determined by this information as they have no non-trivial automorphisms. Hence, we can represent this as follows:
$$\xymatrix@C=.4em@R=3em{ & \pt & & & & & & & & & & \pt & & & & & & & & & & \pt &\\
						  \pt\limits_{+x_2}\ar[ur]\ar@[red][urrrrrrrrrrr] & & \pt\limits_{-x_2}\ar[ul]\ar@[red][urrrrrrrrr] & & & & & & \pt\limits_{a}\ar[ulllllll]\ar@[red][urrrrrrrrrrrrr] & & \pt\limits_{b}\ar[ulllllllll]\ar@[red][urrrrrrrrrrr] & & \pt\limits_{c}\ar[ulllllllllll]\ar@[red][urrrrrrrrr] & & \pt\limits_{d}\ar[ulllllllllllll]\ar@[red][urrrrrrr] & & & & & & \pt\limits_{+y_2}\ar[ulllllllll]\ar@[red][ur] & & \pt\limits_{-y_2}\ar[ulllllllllll]\ar@[red][ul]\\
						  & & & & & & & & \pt\limits_{a}\ar[ullllllll]\ar@[red][urrrrrrrrrrrr]\ar@[blue][u] & & \pt\limits_{b}\ar[ullllllllll]\ar@[red][urrrrrrrrrr]\ar@[blue][u] & & \pt\limits_{c}\ar[ullllllllll]\ar@[red][urrrrrrrrrr]\ar@[blue][u] & & \pt\limits_{d}\ar[ullllllllllll]\ar@[red][urrrrrrrr]\ar@[blue][u] & & & & & & & & }$$
	
Here, $\{\pt\limits_a \quad\pt\limits_b \quad\pt\limits_c \quad\pt\limits_d\}$ represents the groupoid $\Pi_1(C_2^\circ(x_2)\wedge C_2^\circ(z_2))$, with $a=(+x_2,+z_2)$, $b=(+x_2,-z_2)$, $c=(-x_2,+z_2)$ and $d=(-x_2,-z_2)$. The different colours are simply meant to help separate distinct functor. Since $\rho(y_2)=x_2$, we only need look at the sign of $x_2$ to determine the functor. For more on the induced functors between the groupoids see our discussion in Subsection \ref{morps}.
\newline

\noindent
\underline{Step 4:} The 2-colimit of the above diagram is equivalent to $\Pi_1(X)$, by Theorem \ref{Seifert-vanKampen}. We wish to simplify this calculation by using Theorem \ref{colim=2-colim}. For this, we need to replace the participating groupoids with equivalent ones, so that the conditions B1) and B2) will be satisfied.

In our case, $n=3$. For simplicity, we will write $\Pi_1(123)$ to mean $\Pi_1(\RSpec(M_{123}))$. More generally, we will use the notation $\Pi_1(12, 13, 123)$ to mean the colimit of the following diagram of groupoids:
$$\xymatrix{ \Pi_1(\RSpec(M_{12})) & &\\
			 \Pi_1(\RSpec(M_{123}))\ar[u]\ar[r] & \Pi_1(\RSpec M_{13}). }$$
Let us consider B1). We have $k=1$ or $k=2$. For $k=1$, we need to make the functor $\Pi_1(123)\rightarrow \Pi_1(23)$ injective on objects. For $k=2$, we need to make the functor $\Pi_1(13,23,123)\rightarrow \Pi_1(3)$ injective on objects.

Next, we consider B2). As $k=1$, we have $U\subseteq\{3\}$, as the condition $|U|\geq n-k-2$ trivialises. The case $U=\emptyset$ is the same as B1) for $k=1$. For $U=\{3\}$, we need to make the functor $\Pi_1(12)\rightarrow \Pi_1(2)$ injective on objects.

We need to be a bit careful with the ordering with which we make these 3 functors injective on objects. In particular, we need to concentrate on $\Pi_1(123)\rightarrow \Pi_1(23)$ before we move on to $\Pi_1(13,23,123)\rightarrow \Pi_1(3)$. However, it is easily seen that we can always choose a chronology with which we would not have to repeat our steps.

The induced diagram will look as follows:
$$\xymatrix@C=.34em@R=3em{ & & & \pt & & & & & & \pt\ar@/^/[rr]^{\alpha_{2}} & & \pt & & & & & & \pt\limits_{a}\ar@/^/[rr]^{\alpha_{3}} & & \pt\limits_{b}\ar@/^/[rr]^{\beta_{3}} & & \pt\limits_{c}\ar@/^/[rr]^{\gamma_{3}} & & \pt\limits_{d} & & & \\
	\pt\limits_{+x_2}\ar[urrr]\ar@[red][urrrrrrrrr] & & \pt\limits_{-x_2}\ar[ur]\ar@[red][urrrrrrrrr] & & & & & & \pt\limits_{a}\ar[ulllll] & & \pt\limits_{b}\ar[ulllllll] & & \pt\limits_{c}\ar[ulllllllll] & & \pt\limits_{d}\ar[ulllllllllll] & & & & & & \pt\limits_{a}\ar[ulllllllllll]\ar@/_/[rr]\ & & \pt\limits_{b}\ar[ulllllllllllll] & & \pt\limits_{c}\ar[ulllllllllllll]\ar@/_/[rr] & & \pt\limits_{d}\ar[ulllllllllllllll]\\
	& & & & & & & & & & \pt\limits_{a}\ar[ullllllllll] & & \pt\limits_{b}\ar[ullllllllllll] & & \pt\limits_{c}\ar[ullllllllllll] & & \pt\limits_{d}.\ar[ullllllllllllll] & & & & & & & & }$$
Here, $\xymatrix@C=.5em{ \pt\ar@/^/[rr]^{-} & & \pt }$ denotes the connecting isomorphism between the two points. These morphisms must map to the connecting isomorphisms above them. This groupoid is clearly equivalent to the trivial groupoid $\pt$. Further, the endofunctors between $\xymatrix@C=.5em{\pt\limits_{a} & \pt\limits_{b} & \pt\limits_{c} & \pt\limits_{d} }$ have been omitted, as they are clear.
\newline

\noindent
\underline{Step 5:} The colimit and 2-colimit of the above diagram are equivalent by Proposition \ref{colim=2-colim}. The set of objects in the colimit of this diagram, is the colimit of the objects in this diagram.

Morphisms in the colimit are generated by the colimit of the morphisms, modulo the relations coming from the intersections.  We get the following as the 'generator' of our groupoid:
$$\begin{tikzcd}\pt
\ar[%
,loop 
,out=130 
,in=230 
,distance=5em 
]{}{\alpha_2}
\ar[%
,loop 
,out=310 
,in=50 
,distance=2em 
]{}{\alpha_3}
\ar[%
,loop 
,out=310 
,in=50 
,distance=5em 
]{}{\beta_3}
\ar[%
,loop 
,out=310 
,in=50 
,distance=8em 
]{}{\gamma_3}
\end{tikzcd}$$
The relations are the ones induced by the morphisms in the intersections. We have
\begin{eqnarray*} \alpha_2=\alpha_3=\gamma_3.
\end{eqnarray*}
As such, $\Pi_1(\R X)$ is equivalent to $\mathsf{F}_2$, the free group with 2 generators $\gen{\alpha_2,\beta_3}$, seen as a groupoid with one object.

\begin{center}
	
\end{center}

\begin{thebibliography}{9}
		
		\bibitem{lgm}	\emph{D. Abramovich, Q. Chen, D. Gillam, Y. Huang, M. Olsson, M. Satriano} \& \emph{S. Sun.} Logarithmic Geometry and Moduli. arXiv:1006.5870, (2010).
		
		\bibitem{arkowitz} \emph{M. Arkowitz.} Introduction to homotopy theory. Springer. 2011.
		
		\bibitem{S.Bot} \emph{S. B\"ottger.} Monoids with Absorbing Elements and their Associated Algebras. \emph{PhD Thesis: Universit\"at Osnabr\"uck}: (2015).
		
		\bibitem{brown} \emph{R. Brown.} Topology. A geometric account of general topology, homotopy types and the fundamental groupoid. Ellis Howard Limited. (1988).
		
		\bibitem{Holger} \emph{H. Brenner.} Topologische und algebraische Filter. Diplomarbeit Ruhr Universit\"at Bochum, (1994).
		
		\bibitem{cc1} \emph{A. Connes} \& \emph{C. Consani.} Schemes over $\mathsf{F}_1$ and Zeta functions. Compositio Math. 146. p.1383-1415, (2010).
		
		\bibitem{chww} \emph{G. Cortinas, C. Haesemeyer, M. E. Walker and C. Weibel.} Toric variaties, monoid schemes and $cdh$ descent. \emph{Journal f\"ur die reine und angewandte Mathematik. 698, p.1-54. (2015).}
		
		\bibitem{deitmar} \emph{A. Deitmar.} Schemes over $\mathbb{F}_1$. In: Number fields and function field. Two parallel worlds. Ed. by G. van der Geer, B. Moonen, R. Schoof. Progr. in Math. 236. (2005).

\bibitem{gz} \emph{P. Gabriel and M. Zisman.} Calculus of Fractions and Homotopy Theory, Ergebnisse der Mathematik und ihrer Grenzgebiete, Band 35. Springer-Verlag New York, Inc., New York, 1967.
		
		\bibitem{Grillet} \emph{P. A. Grillet.} Commutative semigroups. Kluwer Aacademic Publisher. (2001).
		
		\bibitem{R. Godement} \emph{R. Godement.} Topologie algébrique et théorie des faisceaux. Herman. Paris. (1958).
		
		\bibitem{lsf} \emph{K. Kato.} Logarithmic structures of Fontaine-Illusie. Algebraic analysis, geometry, and number theory (Baltimore, MD),p.191-224, (1989).
		
		\bibitem{kato} \emph{K. Kato.} Toric singularities. American Journal of Mathematics. 11; 6, p.1073-1099, (1994).
		
		\bibitem{kow} \emph{N. Kurokawa, H. Ochai} \& \emph{M. Wakayama.} Absolute Derivations and Zeta Functions. Documenta Mathematica Extra Volume Kato, p.565-584, (2003).
		
		\bibitem{jep} \emph{J.E. Pin.} Tropical Semirings. Publications of the Newton Institute (No. 11), p.50-69, (1998).
		
		\bibitem{spec} \emph{I. Pirashvili.} On the spectrum of monoids and semilattices. J. Pure Appl. Algebra. 217, p.901-906, (2013).
		
		\bibitem{p3} \emph{I. Pirashvili.} The fundamental groupoid as a terminal costack. Georgian Math. J., 22(4), 563-571, (2014).

                     \bibitem{p7} \emph{I. Pirashvili.} On the equivalence of colimits and 2-colimits. arXiv:1905.11288. 


		\bibitem{street} \emph{R. Street.} Fibrations in bicategories.	Cahiers de Topologie et G\'eom\'etrie. Diff\'erentielle Cat\'egoriques, Vol.28, p.53-56, (1987).
		
		\bibitem{Vuma} \emph{V Uma.} On the fundamental group of real toric varieties. Proc. Indian Acad. Sci. (Math. Sci.) Vol. 114, No. 1, February 2004, pp. 15–31, (2004).
		
	\end{thebibliography}
\end{document}